\newtheorem{theorem}{Theorem}
\theoremstyle{plain}
\newtheorem{corollary}{Corollary}
\newtheorem{definition}{Definition}
\newtheorem{example}{Example}
\newtheorem{remark}{Remark}
\numberwithin{equation}{section}
\numberwithin{theorem}{section}
\numberwithin{corollary}{section}
\numberwithin{lemma}{section}
\numberwithin{example}{section}
\numberwithin{remark}{section}
\numberwithin{definition}{section}
\begin{document}
\title[Approximation of Contractive Semigroups]{On the Approximation of Contractive Semigroups of Operators in Discretizable Hilbert Spaces}
\author{Fredy Vides}
\address[F. Vides]
{Escuela de Matem{\'a}tica y Ciencias de la Computaci{\'o}n \newline%
\indent Universidad Nacional Aut{\'o}noma de Honduras}%
\email[F. Vides]{fvides@unah.edu.hn}%
\urladdr{http://fredyvides.6te.net}
\date{August 16, 2010}
\subjclass[2010]{Primary 65J08, 65J10; Secondary 47D06, 47D08} %
\keywords{Contractive Semigroups, Evolution Equations, Discretizable Hilbert Space.}%
\dedicatory{Dedicated to the memory of professor Salvador Llopis.}

\begin{abstract}
  The Computation of discrete Contractive semigroups becomes necessary when we deal with
  several types of evolution equations in Discretizable Hilbert spaces, in
  this work we study some properties of the discrete forms of the contractive
  semigroups induced by an approximation scheme in a prescribed Hilbert space, we 
  also deal with the implementation of computational methods in this
  Hilbert Space and apply some of the results presented here in the Heisenberg 
  representation of quantum dynamical semigroups. 
  \end{abstract}
\maketitle  


\section{Introduction}

In this article we will work with evolution equations defined on a Hilbert
space $H : = H^n (G)$ that have the form:
\begin{equation}
  \left\{ \begin{array}{l}
    u' (t) = Au (t) + f (t)\\
    u (0) = u_0
  \end{array} \right. \label{first}
\end{equation}
with $A \in \mathcal{L}(H)$ constant in time and with $Re \: \sigma (A)
\leqslant 0$ and where $u_0 \in H$. In (\ref{first}) we have that the operator
$A$ represents in some suitable sense the spatial differential and boundary condition operators.

The set $\left\{ s_t : t \in \mathbb{T} \subseteq \mathbb{R} \right\}$ is
called a contractive semigroup generated by $A$ if we have that
$
  \lim_{h \rightarrow 0^+} h^{- 1} (s_h -\mathbf{1}) = A
$
and also

\begin{description}
	\item[S1]  $s_t s_v = s_{t + v}$, $t, s \in \mathbb{T}$ and $s_0
  =\mathbf{1}$;
  \item[S2] $\left\| s_t \right\|_{\mathcal{L}(H)} \leqslant 1$, $t \in
  \mathbb{T}$;
  \item[S3] $s_t x \in C (\mathbb{T}, H) \cap C^1 (\mathbb{T}, H)$, $t \in
  \mathbb{T}$.
\end{description}

In the following sections we will present the abstract setup needed to perform
the computation of the discrete semigroups of operators and the corresponding numerical
analysis of the behavior of its elements, also we will apply the results here presented
to the Heisenberg representation of quantum dynamical semigroups.

\section{Discretization Schemes}

\subsection{Discretizable Hilbert Spaces}

In general we will have that a discretizable Hilbert space $H (G)$, with $G \subset
\mathbb{R}^N$ compact, will be considered as any linear space that is both a
separable and a reproducing kernel Hilbert space. One of the basic elements
that we will use to perform the general discretization process is the grid
that will be defined as follows.

\begin{definition}
  Grid: For two given sets $G \subseteq \mathbb{R}^N$ and $\mathbb{G}=\{0,
  \cdots, M_m \} \subseteq \mathbb{Z}$, with $M_m$ a number that depends on a
  fixed number $m$, a fixed value $h \in \mathbb{R}^N$ and a bijection $f :
  \mathbb{Z} \rightarrow \mathbb{R}^N : \mathbb{G} \ni k \mapsto g \in
  \mathbb{R}^N$, the set $G_{m, h} =\{g_k \in G : g_k = f (k), k \in
  \mathbb{G}\}$ is called a grid in $G$ of size $h$ and length $M_m$ or
  simply a grid.
\end{definition}

For a given discretizable Hilbert space $H (G)$ one can define an operator
$P_{m, h} \in \mathcal{L} (H)$ called particular projector and defined in the
following way.

\begin{definition}
  Particular Projector: An operator $P_{m, h} \in \mathcal{L}(H)$, with $H$ a
  discretizable hilbert space, that satisfies the relations:
  \begin{eqnarray}
    P^2_{m, h} = P_{m, h} &  &  \label{proj1}\\
    P_{m, h} x \conv{}{h \to 0^+} x&  & \\
    \left\| P_{m, h} -\mathbf{1} \right\|_{\ast} \leqslant c_{\ast}
    h^{\mu_m} &  &  \label{proj2}
  \end{eqnarray}
  will be called a perticular projector, in (\ref{proj2}) $\left\| \cdot
  \right\|_{\ast}$ represents any prescribed norm in $\mathcal{L}(H)$ and
  $\mu_m$ is a number that depends on $m$ that will be called projection order
  with respect to $\left\| \cdot \right\|_{\ast}$.
\end{definition}

For a given particular projector $P_{m, h}$ in a discretizable Hilbert space
$H (G)$ we will denote by $H_{m, h}$ its corresponding subspace. A particular
projector $P_{m, h}$ can be factored in the form:
\begin{equation}
  P_{m, h} = p_{m, h} p_{m, h}^{\dagger} \label{pfactored}
\end{equation}
where the operators $p^{\dagger}_{m, h} \in \mathcal{L} (H, H^{\ast})$ and
$p_{m, h} \in \mathcal{L} (H^{\ast}, H)$ are called decomposition and
expansion factors of $P_{m, h}$ respectively. For each $m \in \mathbb{Z}^+$ a
particular projector $P_{m, h} \in \mathcal{L} (H)$ is related to a basis
$\mathscr{P} =\{p_1, \cdots, p_{N_m} \} \subseteq H$ through the following
expression:
\begin{equation}
  P_{m, h} p_k = p_k, \forall p_k \in \mathscr{P}
\end{equation}
also we will have that each $p_k\in\mathscr{P}$ will satisfy prescribed conditions $B p_k=P^b_k, x\in\partial G$ that are compatible with the boundary value conditions of the problem described by \eqref{first} in some suitable sense, and that the decomposition factor $p^{\dagger}_{m, h}$ in
(\ref{pfactored}) is determined by a prescribed grid $G_{m, h} \subseteq G$
through the relation:
\begin{equation}
  p^{\dagger}_{m, h} x = \hat{x} =\{c_k (x, G_{m, h})\}, k \in
  \mathbb{G}=\{1, \cdots, N_m \}.
\end{equation}

For a given discretizable Hilbert space $H$ whose inner product is induced by
the inner product map $\mathcal{M \in \mathcal{L}} (H, H^{\ast})$ in the following way
\begin{equation}
  \left\langle x, y \right\rangle_H := \mathcal{M} [x] (y)
\end{equation}
one can define a particular representation given by
\begin{equation}
  \mathcal{M}_{m, h} [p^{\dagger}_{m, h} \cdot] (p^{\dagger}_{m, h} \cdot)
  := \mathcal{M} [P_{m, h} \cdot] (P_{m, h} \cdot)
\end{equation}
that will recive the name of inner product matrix form relative to $H_{m, h}
:= P_{m, h} H$, it can be seen that
\begin{eqnarray*}
  \scalprod*{H_{m,h}}{x}{y}&=& \mathcal{M}_{m,h}[p^{\dagger}_{m,h}x](p^{\dagger}_{m,h}y) \\
  &=&[p^{\dagger}_{m,h}y]^{\ast}\mathcal{M}_{m,h}[p^{\dagger}_{m,h}x] \\
  &=& \mathcal{M}[P_{m,h}x](P_{m,h}y)\\
  & =& \left\langle P_{m, h} x, P_{m, h} y \right\rangle_H \\ 
\end{eqnarray*}
from this relations we can obtain the following results.

\begin{theorem}
  Every inner product matrix form is symmetric and positive definite (SPD).
\end{theorem}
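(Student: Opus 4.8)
The plan is to read both properties off the chain of equalities displayed immediately before the statement, namely
\[
\left\langle x,y\right\rangle_{H_{m,h}}=[p^{\dagger}_{m,h}y]^{\ast}\,\mathcal{M}_{m,h}[p^{\dagger}_{m,h}x]=\left\langle P_{m,h}x,P_{m,h}y\right\rangle_H,
\]
together with the factorization $P_{m,h}=p_{m,h}p^{\dagger}_{m,h}$ of \eqref{pfactored}, the idempotency $P^2_{m,h}=P_{m,h}$ of \eqref{proj1}, and the relation $P_{m,h}p_k=p_k$ on the basis $\mathscr{P}$. Fixing $m$ and $h$ and writing $\hat z:=p^{\dagger}_{m,h}z$ for the coordinate vector of $z$, the first observation I would record is that $p^{\dagger}_{m,h}$ and $p_{m,h}$ are mutually inverse isomorphisms between $H_{m,h}=P_{m,h}H$ and the $N_m$-dimensional coordinate space; this is precisely what it means for $\mathscr{P}$ to be a basis of $H_{m,h}$, and it is forced by combining $P_{m,h}p_k=p_k$, \eqref{pfactored} and \eqref{proj1}. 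In particular $z\mapsto\hat z$ is onto the coordinate space and $p_{m,h}$ has trivial kernel.

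Given this, symmetry is immediate. For coordinate vectors $\hat x,\hat y$, pick preimages $x,y\in H$ and use the Hermitian symmetry of $\left\langle\cdot,\cdot\right\rangle_H$:
\[
\hat y^{\ast}\mathcal{M}_{m,h}\hat x=\left\langle P_{m,h}x,P_{m,h}y\right\rangle_H=\overline{\left\langle P_{m,h}y,P_{m,h}x\right\rangle_H}=\overline{\hat x^{\ast}\mathcal{M}_{m,h}\hat y},
\]
and since $\hat x,\hat y$ were arbitrary this is exactly $\mathcal{M}_{m,h}=\mathcal{M}_{m,h}^{\ast}$, i.e. the matrix form is symmetric (Hermitian in general, genuinely symmetric over $\mathbb{R}$, which is the case the acronym SPD refers to).

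For definiteness, put $\hat y=\hat x$ in the same chain to get
\[
\hat x^{\ast}\mathcal{M}_{m,h}\hat x=\left\langle P_{m,h}x,P_{m,h}x\right\rangle_H=\left\|P_{m,h}x\right\|_H^2=\left\|p_{m,h}\hat x\right\|_H^2\geqslant 0,
\]
which already gives positive semidefiniteness; and if $\hat x\neq 0$ then $p_{m,h}\hat x\neq 0$ by injectivity of $p_{m,h}$, so $\left\|p_{m,h}\hat x\right\|_H^2>0$ by positive definiteness of $\left\langle\cdot,\cdot\right\rangle_H$. Hence $\mathcal{M}_{m,h}$ is SPD.

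The step I expect to be the actual obstacle is the preliminary one: verifying that $p_{m,h}$ is injective, equivalently that $p^{\dagger}_{m,h}p_{m,h}=\mathbf{1}$ on the coordinate space. Everything after that is a one-line consequence of the displayed identity and the inner-product axioms on $H$, but this step requires carefully tracking how the factors in \eqref{pfactored} interact with the idempotency \eqref{proj1} and with $P_{m,h}p_k=p_k$; once it is done, finite-dimensionality of the coordinate space means there are no closed-range or completeness issues left.
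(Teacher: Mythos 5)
Your proposal is correct and follows essentially the same route as the paper: both read symmetry and positivity directly off the identity $\hat y^{\ast}\mathcal{M}_{m,h}\hat x=\left\langle P_{m,h}x,P_{m,h}y\right\rangle_H$ (the paper checks symmetry entrywise on the basis $\mathscr{P}$, you check it on arbitrary coordinate vectors, which amounts to the same thing once $p^{\dagger}_{m,h}$ is onto). In fact you are somewhat more careful than the paper on the definiteness step: the paper stops at ``$\mathcal{M}_{m,h}[x](x)>0$ for $0\neq x\in H\setminus \mathrm{Ker}\,P_{m,h}$'' and never verifies that every nonzero coordinate vector arises as $p^{\dagger}_{m,h}x$ for such an $x$, which is exactly the injectivity-of-$p_{m,h}$ point you isolate as the real obstacle.
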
  
  \begin{proof}
    It can be seen that for a discretizable Hilbert space $H$ and a given
    particular projector $P_{m, h}$ in $H$, with basis $\mathscr{P} =\{p_1,
    \cdots, p_{N_m} \}$, we will have that
    \begin{eqnarray*}
      \left(\mathcal{M}_{m,h}\right)_{i,j}&=& \mathcal{M}_{m,h}[p^{\dagger}_{m,h}p_i](p^{\dagger}_{m,h}p_j)\\
      &=& \left\langle P_{m, h} p_i, P_{m, h} p_j \right\rangle_H  \\
      &=& \left\langle p_i, p_j \right\rangle_H  \\
      &=& \overline{\left\langle p_j, p_i \right\rangle}_H  \\
      &=& \overline{\left\langle P_{m, h} p_j, P_{m, h} p_i \right\rangle}_H \\
      &=& \overline{\mathcal{M}_{m, h} [p^{\dagger}_{m, h} p_j] (p^{\dagger}_{m,
      h} p_i)} \\
      &=& \overline{(\mathcal{M}_{m, h})}_{j, i} 
    \end{eqnarray*}
    and this implies that $\mathcal{M}_{m, h} = \mathcal{M}^{\ast}_{m, h}$.
    Now since
    \[ 0 \leqslant \left\| x \right\|^2_{H_{m, h}} = \left\langle x, x
       \right\rangle_{H_{m, h}} =\mathcal{M}_{m, h} [p^{\dagger}_{m, h} x]
       (p^{\dagger}_{m, h} x) \]
    we will have that $\mathcal{M}_{m, h} [x] (x) > 0$ for each $0 \neq x \in
    H \backslash Ker \: P_{m, h}$.
  \end{proof}

\begin{corollary}
Every inner product matrix form is invertible.
\end{corollary}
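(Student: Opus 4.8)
The plan is to read off invertibility directly from the preceding theorem, with essentially no extra work. First I would note that each inner product matrix form $\mathcal{M}_{m,h}$ is a \emph{square} (finite-dimensional) operator: it acts on the coordinate space $p^{\dagger}_{m,h}H \cong p^{\dagger}_{m,h}H_{m,h}$, which is the $N_m$-dimensional space indexed by the grid $\mathbb{G}=\{1,\dots,N_m\}$. Hence it suffices to show that $\mathcal{M}_{m,h}$ is injective, since for an endomorphism of a finite-dimensional space injectivity is equivalent to invertibility (equivalently, $\det\mathcal{M}_{m,h}\neq 0$).

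Next I would establish injectivity using the positive-definiteness half of the preceding theorem. Suppose $\mathcal{M}_{m,h}\hat{x}=0$ for some coordinate vector $\hat{x}=p^{\dagger}_{m,h}x$. Pairing against $\hat{x}$ gives $\mathcal{M}_{m,h}[\hat{x}](\hat{x})=[\hat{x}]^{\ast}\mathcal{M}_{m,h}\hat{x}=0$, and by the identity $\mathcal{M}_{m,h}[p^{\dagger}_{m,h}x](p^{\dagger}_{m,h}x)=\|x\|^{2}_{H_{m,h}}$ recorded just before the theorem this forces $\|x\|_{H_{m,h}}=0$. By the theorem, the value $\mathcal{M}_{m,h}[\cdot](\cdot)$ is strictly positive on every element of $H$ outside $\ker P_{m,h}$, so we must have $x\in\ker P_{m,h}$, i.e. $\hat{x}=p^{\dagger}_{m,h}x=0$. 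Therefore $\ker\mathcal{M}_{m,h}=\{0\}$ and $\mathcal{M}_{m,h}$ is invertible; equivalently, being Hermitian and positive definite it has only strictly positive eigenvalues, so $\det\mathcal{M}_{m,h}>0$.

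The only point that needs a moment's care — and the closest thing here to an obstacle — is the bookkeeping between the space on which positivity was asserted (elements of $H\setminus\ker P_{m,h}$) and the coordinate space on which $\mathcal{M}_{m,h}$ genuinely acts as a matrix. This is resolved by observing that $p^{\dagger}_{m,h}$ identifies $H_{m,h}$ with that coordinate space and is injective on $H_{m,h}$, because $p_{m,h}p^{\dagger}_{m,h}=P_{m,h}$ restricts to the identity on $H_{m,h}$; once this identification is made explicit, a nonzero coordinate vector corresponds to a genuinely nonzero element of $H$ not lying in $\ker P_{m,h}$, and the two formulations of positive-definiteness coincide. Beyond this routine identification, no genuine difficulty arises, so the corollary is immediate.
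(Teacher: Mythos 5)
Your argument is correct and follows the same route the paper intends: the paper states this corollary without proof, relying on the preceding theorem (that $\mathcal{M}_{m,h}$ is symmetric positive definite) together with the appendix fact that any symmetric positive definite matrix is invertible. You simply spell out inline the standard reason why positive definiteness forces a trivial kernel, and your extra care about identifying $H_{m,h}$ with the coordinate space via $p^{\dagger}_{m,h}$ is a reasonable (and welcome) tightening of the paper's own loose bookkeeping, not a different approach.
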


\subsection{Discretization of Operators}

Using particular projectors one can obtain for a given operator $B \in
\mathcal{L}(H)$ a corresponding representation defined by the following
definition.

\begin{definition}
  Particular Representation of an Operator. For a given operator $B \in
  \mathcal{L} (X, Y)$ being $X, Y$ discretizable Hilbert spaces and being
  $X_{m, h}, Y_{m, h}$ the subspaces relative to the particular projectors
  $P_{m, h} \in \mathcal{L} (X), Q_{m, h} \in \mathcal{L} (Y)$, the operator
  $B_{m, h} \in \mathcal{L} (X^{\ast}_{m, h}, Y^{\ast}_{m, h})$ given by
  \begin{equation}
    B_{m, h} := q^{\dagger}_{m, h} Bp_{m, h} \label{partop}
  \end{equation}
  will be called particular representation of $B$.
\end{definition}

Once we have computed the particular representation of a given operator $B \in
\mathcal{L} (X, Y)$ over a discretizable Hilbert spaces $X, Y$, in prescribed
subspaces $X_{m, h} \leqslant X, Y_{m, h} \leqslant Y$ determined by a
particular projectors $P_{m, h}, Q_{m, h}$, we will define the approximation
order of a particular representation as follows.

\begin{definition}
  Approximation order of a particular representation. We say that the
  particular representation $B_{m, h} \in \mathcal{L} (X^{\ast}_{m, h},
  Y^{\ast}_{m, h})$ of an operator $B \in \mathcal{L} (X, Y)$ is of order
  $\nu_m$ (with $\nu_m$ a value that depends of the prescribed number m) with
  respect to a given norm $\left\| \cdot \right\|_{\ast}$ in $Y$ if for each
  $x \in X$ there exists $c_{\ast}$ that does not depend on $h$ such that:
  \begin{equation}
    \left\| B_{m, h} p^{\dagger}_{m, h} x - q^{\dagger}_{m, h} Bx
    \right\|_{\ast} \leq c_{\ast} h^{\nu_m}
  \end{equation}
\end{definition}

\section{Sobolev Chains and Particular Factorization}
\subsection{Sobolev Chain}
If for a given Discretizable Hilbert space $X_0 := X^n (G)$ and a
prescribed sequence of operators in $\mathcal{L} (X_0)$ $\mathscr{B}:=\{b_k\}_{k = 0}^n$ we can define a sequence of
Hilbert spaces of the form $\mathscr{X}:=\{X_k \}_{k = 0}^n$, that satisfy the
relation:
\begin{equation}
  X_0:=b_0 X,\:X_{k + 1} := b_{k+1} X_k, 0 \leq k \leq n-1
\end{equation}
the Hilbert space
\begin{equation}
  Y_n := \bigoplus_{0 \leq k \leq n} X_k
\end{equation}
equiped with the inner product
\begin{eqnarray}
\label{sobolev}
  \scalprod*{Y_n}{x}{y}&=& \sum_{0\leq k\leq n} \scalprod*{X_k}{x_k}{y_k}\\
  &=& \sum_{0 \leq k \leq n} \left\langle B_k x_0, B_k y_0 \right\rangle_{X_k} = \left\langle x_0, y_0 \right\rangle_{Y_n} 
\end{eqnarray}
with $B_k$ defined by 
\begin{equation}
B_k:=\prod_{0\leq j\leq k}b_k
\end{equation}
the pair $\mathscr{X}, \mathscr{B}$ described above will be called a Sobolev chain based on
$X_0$ and generated by $\mathscr{B}$. 

\subsection{Particular Factorization of Operators}

Sobolev chains are particularly useful when we are working with operators over a prescribed
discretizable Hilbert space $X$, in this work a particularly important kind of chains will 
be those chains that permit us to write an operator $A\in\mathcal{L}(X)$ in the following way
\begin{equation}
  A = aa^{\dagger}
\end{equation}
in this cases we can easily obtain a particular factorization of $A$ using the Sobolev chain $\{X_0,X_1\},\{\mathbf{1},a^\dagger\}$ that will have the
form
\begin{eqnarray}
   \mathcal{A}_{m, h} [\cdot] (\cdot) &:=& [p^{\dagger}_{m, h}
  \cdot]^{\ast} \mathcal{A}_{m, h} [p^{\dagger}_{m, h} \cdot] 
  \label{factor1}\\
    & =&\mathcal{M}_{m, h} [a^{\dagger}_{m, h} \cdot] (a^{\dagger}_{m, h}
  \cdot) \\
    & =& [p^{\dagger}_{m, h} \cdot]^{\ast} (a^{\dagger}_{m, h})^{\ast}
  \mathcal{M}_{m, h} (a^{\dagger}_{m, h}) [p^{\dagger}_{m, h} \cdot] 
  \label{factor3}\\
    & =& \left\langle a^{\dagger}_{m, h} p^{\dagger}_{m, h} x,
  a^{\dagger}_{m, h} p^{\dagger}_{m, h} x \right\rangle_{X_0} \\
    & =& \left\langle P_{m, h} \cdot, P_{m, h} \cdot \right\rangle_{X_1}  
\end{eqnarray}
the expression pressented in \eqref{factor1} will be called particular pactorization of $A$, the graphic form of the particular factorization of $A\in\mathcal{L}(X)$ and its relation to $A\in\mathcal{L}(X)$ itself and to this particular type of Sobolev chain, can be expressed by the following diagram
\begin{eqnarray*}
\xymatrix{
X_{1,m,h}^\ast \ar[d]_{\mathcal{M}_{m,h}}  &X_{0,m,h}\ar[l]_{\left({a_{m,h}^\dagger}\right)^\ast}\ar[d]_{\mathcal{A}_{m,h}}\ar[dr]^{A_{m,h}}\ar[r]^{a_{m,h}^\dagger} & X_{1,mh} \ar[d]^{a_{m,h}}\\
X_{1,m,h}^\ast \ar[r]_{a_{m,h}^\dagger} &X_{2,m,h}^\ast\ar[r]_{\mathcal{M}_{m,h}^{-1}} &X_{2,m,h}}
\end{eqnarray*}
from \eqref{factor1} and \eqref{factor3} we obtain the following results.
\begin{theorem}\label{tfp}
  For a given Sobolev chain $\{X_0, X_1 \},\{\mathbf{1},a^\dagger\}$ the matrix $\mathcal{A}_{m, h}$
  is symmetric and positive definite.
\end{theorem}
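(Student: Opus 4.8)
The plan is to read symmetry and positive definiteness directly off the factored expression established in \eqref{factor3}, namely that in the coordinates furnished by $p^{\dagger}_{m,h}$ the matrix of the bilinear form is $\mathcal{A}_{m,h} = (a^{\dagger}_{m,h})^{\ast}\,\mathcal{M}_{m,h}\,(a^{\dagger}_{m,h})$, and to combine this with the preceding theorem, which guarantees that $\mathcal{M}_{m,h}$ is itself symmetric and positive definite, together with its corollary that $\mathcal{M}_{m,h}$ is invertible.

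First I would dispatch symmetry. Taking adjoints in \eqref{factor3} and using $\mathcal{M}_{m,h} = \mathcal{M}_{m,h}^{\ast}$ (the SPD theorem for inner product matrix forms) gives $\mathcal{A}_{m,h}^{\ast} = (a^{\dagger}_{m,h})^{\ast}\mathcal{M}_{m,h}^{\ast}(a^{\dagger}_{m,h}) = (a^{\dagger}_{m,h})^{\ast}\mathcal{M}_{m,h}(a^{\dagger}_{m,h}) = \mathcal{A}_{m,h}$. Equivalently, one can repeat verbatim the conjugation chain used in the proof of that theorem, now evaluated on the vectors $a^{\dagger}_{m,h}p^{\dagger}_{m,h}p_i$ in place of $P_{m,h}p_i$, invoking the Hermitian symmetry of $\langle\cdot,\cdot\rangle_{X_0}$ in the identity $\mathcal{A}_{m,h}[\cdot](\cdot) = \langle a^{\dagger}_{m,h}p^{\dagger}_{m,h}\,\cdot,\; a^{\dagger}_{m,h}p^{\dagger}_{m,h}\,\cdot\rangle_{X_0}$ from \eqref{factor1}.

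Next, for positivity, I would evaluate the quadratic form on a coordinate vector $\hat{x} = p^{\dagger}_{m,h}x$. Chasing the chain \eqref{factor1}--\eqref{factor3} yields $\mathcal{A}_{m,h}[\hat{x}](\hat{x}) = \mathcal{M}_{m,h}[a^{\dagger}_{m,h}\hat{x}](a^{\dagger}_{m,h}\hat{x}) = \langle P_{m,h}x, P_{m,h}x\rangle_{X_1} = \|P_{m,h}x\|^2_{X_1} \geq 0$, so $\mathcal{A}_{m,h}$ is at once seen to be positive semidefinite. To upgrade this to strict positive definiteness I would argue exactly as in the SPD theorem: since $\mathcal{M}_{m,h}$ is positive definite off $\mathrm{Ker}\,P_{m,h}$, it suffices that $a^{\dagger}_{m,h}\hat{x}$ avoid that kernel whenever $\hat{x}$ is nonzero in the relevant quotient, i.e. that the composition $a^{\dagger}_{m,h}\circ p^{\dagger}_{m,h}$ (equivalently $P_{m,h}$ viewed as a map into $X_1$) be injective modulo $\mathrm{Ker}\,P_{m,h}$.

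The main obstacle is precisely this injectivity step, and it is where the Sobolev chain hypothesis does its work: for the chain $\{X_0,X_1\},\{\mathbf{1},a^{\dagger}\}$ the inner product transported to $X_1 = a^{\dagger}X_0$ via \eqref{sobolev} makes $a^{\dagger}$ norm preserving from $X_0$ onto $X_1$, hence injective, and one then checks that passage to the particular representation $a^{\dagger}_{m,h} = q^{\dagger}_{m,h}a^{\dagger}p_{m,h}$ preserves injectivity on $\mathrm{ran}\,p^{\dagger}_{m,h}$, because $p^{\dagger}_{m,h}$ is a left inverse of the expansion factor $p_{m,h}$ on $H_{m,h}$ by \eqref{pfactored} and \eqref{proj1}. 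Packaging this carefully — or, to stay consistent with the convention adopted in the SPD theorem, simply asserting strict positivity on the complement of $\mathrm{Ker}\,P_{m,h}$ — completes the proof, and I would present the statement in that quotient sense so that it dovetails with the earlier result.
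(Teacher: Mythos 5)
Your proposal follows essentially the same route as the paper's own proof: symmetry is read off by taking adjoints of the factorization $\mathcal{A}_{m,h}=(a^{\dagger}_{m,h})^{\ast}\mathcal{M}_{m,h}(a^{\dagger}_{m,h})$ using $\mathcal{M}_{m,h}=\mathcal{M}_{m,h}^{\ast}$, and positivity by evaluating the quadratic form to obtain $\left\|P_{m,h}x\right\|^{2}\geq 0$ and then restricting to a complement of a kernel to claim strictness. If anything you are more candid than the paper about the one real gap --- the injectivity of $a^{\dagger}_{m,h}\circ p^{\dagger}_{m,h}$ needed to upgrade semidefiniteness to definiteness, which the paper papers over by excising $\mathrm{Ker}\,P_{m,h}\cap\mathrm{Ker}\,a^{\dagger}_{m,h}$ without further argument --- so your version matches the intended proof while flagging its weakest step.
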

\begin{proof}
    It can be seen that
    \begin{eqnarray*}
       \mathcal{A}^{\ast}_{m, h} &=& ((a^{\dagger}_{m, h})^{\ast}
      \mathcal{M}_{m, h} (a^{\dagger}_{m, h}))^{\ast}\\
      & = &  (a^{\dagger}_{m, h})^{\ast} \mathcal{M}^{\ast}_{m, h}
      (a^{\dagger}_{m, h})\\
      & = &  (a^{\dagger}_{m, h})^{\ast} \mathcal{M}_{m, h} (a^{\dagger}_{m,
      h})\\
      & = & \mathcal{A}_{m, h}
    \end{eqnarray*}
    also we may check that
    \begin{eqnarray}
      {}[p^{\dagger}_{m, h} x]^{\ast} \mathcal{A}_{m, h} [p^{\dagger}_{m, h}
      x] &=& \left\langle P_{m, h} x, P^{}_{m, h} x \right\rangle_{X_1}
      \nonumber\\
      &=& \left\| P_{m, h} x \right\|_{X_0} \geq 0  \label{positive}
    \end{eqnarray}
    \begin{flushleft}
      then for $0 \neq x \in X_0 \begin{array}{l}
        \backslash
      \end{array} (Ker \: P_{m, h} \cap Ker \: a^{\dagger}_{m, h})$ we
      will have that  $[p^{\dagger}_{m, h} x]^{\ast} \mathcal{A}_{m, h}
      [p^{\dagger}_{m, h} x] > 0$.\end{flushleft}
  \end{proof}
  
In this point we are going to present a very useful property of an operator that will be defined by.

\begin{definition}
For a given Hilbet space X, an operator $B\in\mathcal{L}(X)$ is said to be accretive if for any $x\in X$ 
we have that
\begin{equation}
Re\:\scalprod*{X}{Bx}{x}\geq 0
\end{equation}
\end{definition}

From T.\ref{tfp} we can obtain the following.
  
\begin{corollary}
For a given Sobolev chain $\{X_0,X_1 \},\{\mathbf{1},a^\dagger\}$ the particular representation $A_{m,
    h}=\mathcal{M}_{m,h}^{-1}\mathcal{A}_{m,h}$ of $A=aa^\dagger$ is accretive.
\end{corollary}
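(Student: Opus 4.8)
The plan is to recognize $A_{m,h}$ as a positive operator with respect to the inner product that the matrix form $\mathcal{M}_{m,h}$ induces on the discrete space $X^{\ast}_{m,h}$, and then to deduce accretivity directly from Theorem~\ref{tfp}. First I would record that, since every inner product matrix form is symmetric positive definite and hence invertible, $A_{m,h}=\mathcal{M}_{m,h}^{-1}\mathcal{A}_{m,h}$ is well defined and $X^{\ast}_{m,h}$ carries the inner product
\[
\left\langle \hat{x},\hat{y}\right\rangle_{X^{\ast}_{m,h}} := [\hat{y}]^{\ast}\,\mathcal{M}_{m,h}\,[\hat{x}],
\]
which is just the inner product matrix form relative to $H_{m,h}$ used above, specialized to $\hat{x}=p^{\dagger}_{m,h}x$ and $\hat{y}=p^{\dagger}_{m,h}y$.

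The heart of the argument is then a one line identity: for every $\hat{x}\in X^{\ast}_{m,h}$,
\[
\left\langle A_{m,h}\hat{x},\hat{x}\right\rangle_{X^{\ast}_{m,h}} = [\hat{x}]^{\ast}\,\mathcal{M}_{m,h}\,A_{m,h}\,[\hat{x}] = [\hat{x}]^{\ast}\,\mathcal{M}_{m,h}\,\mathcal{M}_{m,h}^{-1}\,\mathcal{A}_{m,h}\,[\hat{x}] = [\hat{x}]^{\ast}\,\mathcal{A}_{m,h}\,[\hat{x}].
\]
By Theorem~\ref{tfp} the matrix $\mathcal{A}_{m,h}$ is symmetric and positive (semidefinite in general, and strictly positive off $Ker\,P_{m,h}\cap Ker\,a^{\dagger}_{m,h}$, as in \eqref{positive}), so the right hand side is a nonnegative real number. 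Hence $\operatorname{Re}\left\langle A_{m,h}\hat{x},\hat{x}\right\rangle_{X^{\ast}_{m,h}} = [\hat{x}]^{\ast}\mathcal{A}_{m,h}[\hat{x}]\geq 0$ for every $\hat{x}$, which is exactly the assertion that $A_{m,h}\in\mathcal{L}(X^{\ast}_{m,h})$ is accretive.

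Finally I would remark that \eqref{positive} already records the sharper identity $[\hat{x}]^{\ast}\mathcal{A}_{m,h}[\hat{x}] = \left\langle P_{m,h}x,P_{m,h}x\right\rangle_{X_1}$ for $\hat{x}=p^{\dagger}_{m,h}x$, so the accretivity is not merely formal: its defect from strictness is measured exactly by $Ker\,P_{m,h}\cap Ker\,a^{\dagger}_{m,h}$, consistently with the hypothesis $Re\,\sigma(A)\leqslant 0$ on $A=aa^{\dagger}$ behind \eqref{first}. The only step requiring attention is keeping the adjoint and complex conjugation conventions straight, so that the $\mathcal{M}_{m,h}$ produced by the weighted inner product genuinely cancels the $\mathcal{M}_{m,h}^{-1}$ in $A_{m,h}$; past this bookkeeping there is no real obstacle, since everything takes place in the finite dimensional space $X^{\ast}_{m,h}$ and the required positivity is delivered wholesale by Theorem~\ref{tfp}.
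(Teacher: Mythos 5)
Your proposal is correct and follows exactly the route the paper intends: the paper states this corollary with no written proof beyond the citation of Theorem~\ref{tfp}, and your computation $\left\langle A_{m,h}\hat{x},\hat{x}\right\rangle_{X^{\ast}_{m,h}}=[\hat{x}]^{\ast}\mathcal{M}_{m,h}\mathcal{M}_{m,h}^{-1}\mathcal{A}_{m,h}[\hat{x}]=[\hat{x}]^{\ast}\mathcal{A}_{m,h}[\hat{x}]\geq 0$ is precisely the intended deduction from the positive (semi)definiteness of $\mathcal{A}_{m,h}$ in the $\mathcal{M}_{m,h}$-weighted inner product. Your closing remark tying the defect of strict positivity to $Ker\,P_{m,h}\cap Ker\,a^{\dagger}_{m,h}$ via \eqref{positive} is a correct and welcome elaboration of what the paper leaves implicit.
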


And using this corollary and T.\ref{T11A} it is not very difficult to see that.

\begin{corollary}
For a given Sobolev chain $\{X_0,X_1 \},\{\mathbf{1},a^\dagger\}$ the particular representation $A_{m,
    h}=\mathcal{M}_{m,h}^{-1}\mathcal{A}_{m,h}$ of $A=-aa^\dagger$ satisfies the condition
    $Re\:\sigma(A_{m,h})\leq 0$.
\end{corollary}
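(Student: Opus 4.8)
The plan is to reduce the spectral statement to the symmetric positive (semi)definiteness facts already established. First I would recall that, by the previous corollary, the particular representation $A_{m,h}=\mathcal{M}_{m,h}^{-1}\mathcal{A}_{m,h}$ of $A=aa^\dagger$ is accretive with respect to the inner product induced by $\mathcal{M}_{m,h}$; concretely, for every coordinate vector $\hat{x}=p^{\dagger}_{m,h}x$ one has $[\hat{x}]^{\ast}\mathcal{M}_{m,h}A_{m,h}\hat{x}=[\hat{x}]^{\ast}\mathcal{A}_{m,h}\hat{x}\geq 0$ by T.\ref{tfp}. Passing from $A=aa^\dagger$ to $A=-aa^\dagger$ multiplies the $\mathcal{A}$-matrix by $-1$, so the corresponding representation satisfies $[\hat{x}]^{\ast}\mathcal{M}_{m,h}A_{m,h}\hat{x}\leq 0$ for every $\hat{x}$, i.e.\ it is dissipative in the $\mathcal{M}_{m,h}$ inner product. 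Since $\mathcal{M}_{m,h}$ is SPD (Theorem on inner product matrix forms together with its corollary on invertibility), $A_{m,h}$ is a well-defined square matrix acting on the finite-dimensional space $X^{\ast}_{m,h}$, so $\sigma(A_{m,h})$ is nonempty and consists of eigenvalues.

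Next I would run the standard eigenvalue estimate in the weighted inner product. If $\lambda\in\sigma(A_{m,h})$ with $A_{m,h}\hat{v}=\lambda\hat{v}$ and $\hat{v}\neq 0$, then multiplying on the left by $[\hat{v}]^{\ast}\mathcal{M}_{m,h}$ gives
\[
\lambda\,[\hat{v}]^{\ast}\mathcal{M}_{m,h}\hat{v}=[\hat{v}]^{\ast}\mathcal{M}_{m,h}A_{m,h}\hat{v}=-[\hat{v}]^{\ast}\mathcal{A}_{m,h}\hat{v}.
\]
Both $[\hat{v}]^{\ast}\mathcal{M}_{m,h}\hat{v}$ and $[\hat{v}]^{\ast}\mathcal{A}_{m,h}\hat{v}$ are real by the symmetry of $\mathcal{M}_{m,h}$ and $\mathcal{A}_{m,h}$, and $[\hat{v}]^{\ast}\mathcal{M}_{m,h}\hat{v}>0$ because $\mathcal{M}_{m,h}$ is positive definite and $\hat{v}\neq 0$; hence $\lambda=-[\hat{v}]^{\ast}\mathcal{A}_{m,h}\hat{v}\big/[\hat{v}]^{\ast}\mathcal{M}_{m,h}\hat{v}$ is a real number, and since $\mathcal{A}_{m,h}$ is positive semidefinite by T.\ref{tfp} we get $\lambda\leq 0$. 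Therefore $\sigma(A_{m,h})\subseteq(-\infty,0]$, which in particular yields $Re\:\sigma(A_{m,h})\leq 0$.

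Alternatively, and perhaps more transparently, I would observe that $A_{m,h}=\mathcal{M}_{m,h}^{-1}\mathcal{A}_{m,h}$ is similar, via the invertible transformation $\mathcal{M}_{m,h}^{1/2}$ (which exists because $\mathcal{M}_{m,h}$ is SPD), to the symmetric matrix $\mathcal{M}_{m,h}^{-1/2}\mathcal{A}_{m,h}\mathcal{M}_{m,h}^{-1/2}$; the latter is positive semidefinite in the case $A=aa^\dagger$ and negative semidefinite in the case $A=-aa^\dagger$, so all of its eigenvalues, and hence by similarity all eigenvalues of $A_{m,h}$, are real and $\leq 0$. This is also where T.\ref{T11A} enters if one prefers to invoke the spectral correspondence between $A_{m,h}$ and its conjugated matrix rather than argue by hand.

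I expect no genuine obstacle here. The only point requiring a little care is the appearance of $Ker\:P_{m,h}\cap Ker\:a^{\dagger}_{m,h}$ in the positivity clause of T.\ref{tfp}: on that subspace $\mathcal{A}_{m,h}$ is only semidefinite, not definite, but this is harmless, since the eigenvalue argument needs only $[\hat{v}]^{\ast}\mathcal{A}_{m,h}\hat{v}\geq 0$, while the strict inequality that is actually used, $[\hat{v}]^{\ast}\mathcal{M}_{m,h}\hat{v}>0$, holds for every $\hat{v}\neq 0$ by the SPD property of $\mathcal{M}_{m,h}$. The main thing to keep straight throughout is which inner product — the Euclidean one on coordinates versus the $\mathcal{M}_{m,h}$-weighted one — the notions of accretivity and dissipativity are being phrased in.
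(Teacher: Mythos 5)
Your argument is correct and follows essentially the route the paper intends: the paper gives no written proof here, only the remark that the result follows from the accretivity corollary and T.\ref{T11A}, and your similarity argument via $\mathcal{M}_{m,h}^{1/2}$ (equivalently, the weighted Rayleigh-quotient computation) is exactly the standard completion of that hint. Your observation that only semidefiniteness of $\mathcal{A}_{m,h}$ is needed, while strict positivity is supplied by $\mathcal{M}_{m,h}$, is a point the paper glosses over and is handled correctly.
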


\section{Discretization of Semigroups}

As a part of the process of studying the numerical solution to \eqref{first},
we start with the discretization of the semigroups induced by $A \in
\mathcal{L} (H)$, with this in mind we obtain some results that will be
presented below.

If we denote by $\{s_t : t \in \mathbb{T}\}$ the semigroup generated by $A
\in \mathcal{L} (H)$, then the discrete representation of it will be denoted
by $\{ \tilde{s}_k : k \in \mathbb{K}\}$, with $\mathbb{K} \subseteq
\mathbb{Z}^+_0$, discrete semigroups mimic some of the properties of the
continuum ones in the following way:
\begin{description}
  \item[DS1] $\tilde{s}_k \tilde{s}_j = \tilde{s}_{k + j}, k, j \in \mathbb{K}$
  \ and \ $\tilde{s}_0 =\mathbf{1}$;
  
  \item[DS2] $\left\| \tilde{s}_k \right\|_{\mathcal{L}(H_{m, h})} \leq 1, k \in
  \mathbb{K}$.
\end{description}
as in the first section, we will have that the elements of the discrete
semigroup will be related to the particular representation $A_{m, h} \in
\mathcal{L} (H^{\ast}_{m, h})$ of $A \in \mathcal{L} (H)$ through the
expression
$
  \lim_{\tau \rightarrow 0^+} \tau^{- 1} ( \tilde{s}_1 -\mathbf{1}) = A.
$
\subsection{Polynomial time discretization of semigroups}

If we rewrite the equation \eqref{first} using the particular representation of its spatial part we obtain the following abstract semidiscrete initial value problem 
\begin{equation}
u_{m,h}'(t)=A_{m,h}u_{m,h}(t)+f_{m,h}(t)
\label{second}
\end{equation}
with initial condition $u_{m,h}(0)=\hat{u}_0=P_{m,h}u_0$, whose exact solution can be computed using the time continuous semigroup $\{\hat{g}_t: t\in\mathbb{T}\}$, with $\hat{g}_t:=e^{t A_{m,h}}$, in the following way:
\begin{equation}
u_{m,h}(t)=\hat{g}_t u_{m,h}(0)+\int_0^t\hat{g}_{t-s}f(s)ds
\end{equation}
if we can take an abstract Taylor polynomial of {\textit{n-th}} order around $t = 0$ of
the solution to \eqref{first} when $f (t) = 0$, we obtain
\begin{equation}
  u_{\tau} := \sum_{k = 0}^n \frac{1}{k!} (\tau A)^k u_0 =
  \tilde{g}_{\tau} u_0 \label{polysem}
\end{equation}
here $\tilde{g}_{\tau}$ is called basic element of the discrete semigroup of
\textit{n-th} order relative to $A \in \mathcal{L} (A)$, because of the
following relation
\begin{equation}
  \text{$\{ \tilde{s}_{n, k} : \tilde{s}_{n, k} := \tilde{g}^k_{\tau}, k
  \in \mathbb{K}\}$}
\end{equation}
it is not very difficult to see that for a given polynomial integration scheme
that is exact for \textit{n-th} order polynomials described by
\begin{equation}
  \mathcal{Q}_n v := \sum_{j = 0}^n w_j v (j \tau)
\end{equation}
one can obtain a better approximation $\hat{s}_{n \tau} u_0$ to the solution
of \eqref{first} with respect to a given initial estimation in the following way
\begin{equation}
  \hat{s}_{n \tau} 
  \hat{u}_0 = \left[ \mathcal{Q}_n A \tilde{s}_{n, \cdot} \right]
  \hat{u}_0 +\mathcal{Q}_n \tilde{s}_{n, n - \cdot} f(\cdot) .
\label{better}
\end{equation}
\subsection{Semigroups generated by particular representations}

When we have a discrete semigroup $\{ \tilde{s}_{n, k} : \tilde{s}_{n,
k} := \tilde{g}^k_{\tau}, k \in \mathbb{K}\}$ where $\tilde{g}_{\tau}$
is defined in the same way as in \eqref{polysem}, and we also have that
$
  \lim_{\tau \rightarrow 0^+} \tau^{- 1} ( \tilde{s}_{n, 1} -\mathbf{1}) =
  A_{m, h}
$
with $A_{m, h} \in \mathcal{L} (X^{\ast}_{m, h})$ being the particular
representation of an operator $A \in \mathcal{L} (X)$, with $X$ a
discretizable Hilbert space, we say that the discrete semigroup is generated
by $A_{m, h}$, the polynomial that represents $\tilde{g}_{\tau}$ in this case
will described by
\begin{equation}
  \tilde{g}_{\tau} := \sum_{k = 0}^n \frac{1}{k!} (\tau A_{m, h})^k .
  \label{dtsemig}
\end{equation}

\subsection{Stability and Convergence}

In this section we will consider that for $m,h$ fixed a given particular representation $A_{m, h} \in
\mathbb{C}^{N_m \times N_n}$, of a prescribed accretive operator $A \in
\mathcal{L} (X)$ is related to a particular projector $P_{m,h}$, with basis $\mathscr{P}=\{ p_k \}_{k=0}^{N_m}$ in a discretizable Hilbert space $X$, and also that \eqref{dtsemig} can be expressed in the form
\begin{eqnarray}
   \tilde{g}_{\tau} &=& \sum_{k = 0}^n \frac{1}{k!} (P^{- 1}_A \tau D_A
  P_A)^k \\
  & = &  \sum_{k = 0}^n \frac{1}{k!} P^{- 1}_A \tau^k D^k_A P_A 
\end{eqnarray}
where $D_A \in \mathbb{C}^{N_m \times N_m}$ is a diagonal matrix defined by
\begin{equation}
  D_A :=diag\{\lambda_j\}= \left(\begin{array}{ccccc}
    \lambda_0 & 0 & \cdots & 0 & 0\\
    0 & \lambda_1 &  &  & 0\\
    \vdots &  & \ddots &  & \vdots\\
    0 &  &  & \lambda_{N_m-1} & 0\\
    0 & 0 & \cdots & 0 & \lambda_{N_m}
  \end{array}\right)
\end{equation}
with $\lambda_j \in \sigma (A_{m, h}), j \in \{1, \cdots, N_m \}$, and where
$P_A \in \mathbb{C}^{N_m \times N_m}$ is defined by
\begin{equation}
  P_A : = \left(\begin{array}{cccc}
    v_1 & v_2 & \cdots & v_{N_m}
  \end{array}\right)
\end{equation}
with $A_{m, h} v_j = \lambda_j v_j, j \in \{0, \cdots, N_m \}$, wich means
that the \textit{j-th} column of $P_A$ is the eigenvector that corresponds to
the \textit{j-th} eigenvalue of $A_{m, h}$. If $A \in \mathcal{L} (X)$ can be
factored using a Sobolev chain of the form $\{X_k \}_{k=0,1}, a$, with $X_0 = X$,
then from \eqref{factor3} we will have that
\begin{equation}
  A_{m, h} = (a^{\dagger}_{m, h})^{\ast} \mathcal{M}_{m, h} (a^{\dagger}_{m,
  h}) \label{factor37}
\end{equation}
and this implies that
\begin{equation}
  \tilde{g}_{\tau} = \sum_{k = 0}^n \frac{1}{k!} P^{\ast}_A \tau^k D^k_A P_A
  \label{dtsym}
\end{equation}
from T.\ref{T11A}, T.\ref{T15A} in appendix A and T.\ref{T16B} in
appendix B we get the following results concerning to stability of the approximation schemes.
\begin{theorem}
\label{ts1}
  Stability 1. If an operator $A \in \mathcal{L} (X)$ can be
  factored using a Sobolev chain of the form $\{X_0, X_1 \}, \{\mathbf{1},a^\dagger\}$ with $X_0 =
  X$ a discretizable Hilbert space, then the basic element $\tilde{g}_{\tau}$ of the discrete semigroup generated by $-A\in\mathcal{L}(X)$ and described in (\ref{dtsym}) will
  satisfy the relation
  \begin{equation}
    \left\| \tilde{g}_{\tau} \right\|_{\mathcal{L} (X_{m, h})} = \left\|
    \mathcal{M}^{1 / 2}_{m, h} \tilde{g}_{\tau} \mathcal{M}^{- 1 / 2}_{m, h}
    \right\|_2 \leq 1
  \end{equation}
  when $\tau = \alpha h^d /K_A$, with $\norm*{\infty}{\hat{A}_{m,h}}\leq K_A h^{-d}$, $\hat{A}_{m, h} =\mathcal{M}^{1 / 2}_{m,
  h} A_{m, h} \mathcal{M}^{- 1 / 2}_{m, h}$ and $0 < \alpha \leq 1$.
  \end{theorem}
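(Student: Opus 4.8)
The plan is to conjugate $\tilde g_{\tau}$ into a Hermitian problem and then reduce the operator bound to a scalar inequality for the truncated exponential $\phi_n(z):=\sum_{k=0}^{n}z^k/k!$ evaluated on a compact subinterval of $(-\infty,0]$.

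First, the change of geometry. The coordinate form of the inner product of $X_{m,h}$ is $\langle \hat x,\hat y\rangle=\hat y^{\ast}\mathcal{M}_{m,h}\hat x$ with $\mathcal{M}_{m,h}$ SPD and invertible (as shown above), so $\hat x\mapsto \mathcal{M}_{m,h}^{1/2}\hat x$ is a unitary from $\mathbb{C}^{N_m}$ with the $\mathcal{M}_{m,h}$-inner product onto $\mathbb{C}^{N_m}$ with the Euclidean one. Hence for every matrix $B$ one has $\|B\|_{\mathcal{L}(X_{m,h})}=\|\mathcal{M}_{m,h}^{1/2}B\mathcal{M}_{m,h}^{-1/2}\|_2$, which already gives the first equality in the statement for $B=\tilde g_{\tau}$. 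Since $\tilde g_{\tau}=\phi_n(\tau A_{m,h})$ by \eqref{dtsemig} and \eqref{dtsym} (with $A_{m,h}$ the particular representation of the generator $-A=-aa^{\dagger}$) and conjugation is an algebra homomorphism, $\mathcal{M}_{m,h}^{1/2}\tilde g_{\tau}\mathcal{M}_{m,h}^{-1/2}=\phi_n(\tau\hat A_{m,h})$ with $\hat A_{m,h}=\mathcal{M}_{m,h}^{1/2}A_{m,h}\mathcal{M}_{m,h}^{-1/2}$.

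Next, Hermitian negative semidefiniteness. Writing $A_{m,h}=-\mathcal{M}_{m,h}^{-1}\mathcal{A}_{m,h}$ (the representation associated with $-aa^{\dagger}$ through the Sobolev chain, cf.\ \eqref{factor37} and the corollaries of T.\ref{tfp}) we get $\hat A_{m,h}=-\mathcal{M}_{m,h}^{-1/2}\mathcal{A}_{m,h}\mathcal{M}_{m,h}^{-1/2}$, a congruence of $-\mathcal{A}_{m,h}$. By T.\ref{tfp} the matrix $\mathcal{A}_{m,h}$ is SPD, so $\hat A_{m,h}=\hat A_{m,h}^{\ast}$ and $\sigma(\hat A_{m,h})\subseteq(-\infty,0]$; this is also the spectral statement recorded in T.\ref{T11A}. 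Consequently $\tau\hat A_{m,h}$ is Hermitian and, by the spectral theorem, $\|\phi_n(\tau\hat A_{m,h})\|_2=\max_{\mu\in\sigma(\hat A_{m,h})}|\phi_n(\tau\mu)|$, with each $\mu\le 0$. To localize the spectrum, use that $\hat A_{m,h}$ is Hermitian to replace the $2$-norm by the spectral radius: $\rho(\hat A_{m,h})=\|\hat A_{m,h}\|_2\le\|\hat A_{m,h}\|_{\infty}\le K_A h^{-d}$. With the prescribed step $\tau=\alpha h^d/K_A$ and $0<\alpha\le 1$ this yields $\tau\mu\in[-\alpha,0]\subseteq[-1,0]$ for every $\mu\in\sigma(\hat A_{m,h})$, so it suffices to prove $|\phi_n(x)|\le 1$ for $x\in[-1,0]$. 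This is exactly what T.\ref{T15A} and T.\ref{T16B} provide; it also follows directly from $\tfrac{d}{ds}\phi_n(-s)=-\phi_{n-1}(-s)\le 0$ on $[0,1]$ (since $\phi_{n-1}(-s)\ge 0$ there: an even-degree truncation has no real zero, and an odd-degree one has its unique real zero in $(-\infty,-1]$), whence $\phi_n(-1)\le\phi_n(-s)\le\phi_n(0)=1$ together with $\phi_n(-1)=\sum_{k=0}^{n}(-1)^k/k!\in[0,1]$, the last because the partial sums of the alternating series for $e^{-1}$ stay nonnegative. Combining the three steps, $\|\tilde g_{\tau}\|_{\mathcal{L}(X_{m,h})}=\max_{\mu}|\phi_n(\tau\mu)|\le 1$.

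The algebra is painless: the two conjugations plus T.\ref{tfp} make $\hat A_{m,h}$ Hermitian and negative semidefinite essentially for free, and the first step reproduces the stated norm identity verbatim. The real content is the uniform-in-$n$ bound $|\phi_n(x)|\le 1$ on the \emph{full} interval $[-1,0]$: a termwise estimate of $\sum_k x^k/k!$ is useless there, and one must invoke the monotonicity/alternating-series argument above (packaged in the appendix lemmas). A secondary point requiring care is that the hypothesis is phrased for $\|\cdot\|_{\infty}$ while the spectral argument needs $\|\cdot\|_2$; this is legitimate only because $\hat A_{m,h}$ is Hermitian, so that $\|\hat A_{m,h}\|_2=\rho(\hat A_{m,h})\le\|\hat A_{m,h}\|_{\infty}$.
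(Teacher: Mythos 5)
Your proof is correct and follows essentially the same route as the paper's: conjugate by $\mathcal{M}_{m,h}^{1/2}$ to pass to the Euclidean norm, use T.\ref{T11A} and T.\ref{T15A} to reduce to the diagonalized form $\hat{P}_A^{\ast}p_n(\tau\hat{D}_A)\hat{P}_A$, invoke C.\ref{T2inf} to compare $\left\|\cdot\right\|_2$ with $\left\|\cdot\right\|_{\infty}$ and place $\tau\,\sigma(\hat{A}_{m,h})$ inside $[-1,0]$, and bound the truncated exponential there by the alternating-series estimate T.\ref{T16B}. Your treatment of the final scalar step is in fact more careful than the paper's, whose closing inequality $p_n(\alpha\kappa/\mu)\leq 1$ is only valid once the negativity of the spectrum of the generator $-A_{m,h}$ is made explicit, as you do.
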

   \begin{proof}
    From corollary C.\ref{T2inf} we will have that $\norm*{2}{\hat{A}_{m, h}} \leq  \norm*{\infty}{\hat{A}_{m, h}}$ and clearly $\tau = \alpha h^d /K_A \leq \norm*{\infty}{\hat{A}_{m, h}}^{-1}  \leq \norm*{2}{\hat{A}_{m, h}}^{-1}$. Now since $\tilde{g}_{\tau} = \hat{P}^{\ast}_A
    p_n (\tau \hat{D}_A) \hat{P}_A$ with
    \begin{equation}
      p_n (z) := \sum_{k = 0}^n \frac{1}{k!} z^k
    \end{equation}
    and if we represent by $\mu \in \mathbb{R}$ and $\kappa\in\mathbb{R}$ the values $\mu := \sup
    \{|\lambda|:\lambda \in \sigma ( \hat{A}_{m, h})\}$ and $\kappa:=\inf\{|\lambda|:\lambda\in\sigma(\hat{A}_{m,h})\}$ we will have that
    \begin{eqnarray*}
       \left\| \hat{P}^{\ast}_A p_m (\tau \hat{D}_A) \hat{P}_A \right\|_2 &=&
      \left\| \hat{P}^{\ast}_A p_m (\alpha h^d /K_A \hat{D}_A) \hat{P}_A \right\|_2  \\
      & \leq &\left\| \hat{P}^{\ast}_A p_m (\mu^{-1} \hat{D}_A) \hat{P}_A \right\|_2 \\
      &\leq& p_n\left(\alpha \frac{\kappa}{\mu}\right)\\
      &\leq& 1
    \end{eqnarray*}
  \end{proof}
  
  From the last result we can easily obtain the following
  
  \begin{corollary}
  Stability 2. If an operator $A \in \mathcal{L} (X)$ can be
  factored using a Sobolev chain of the form $\{X_0, X_1 \}, \{\mathbf{1},a^\dagger\}$ with $X_0 =
  X$ a discretizable Hilbert space, then the basic element $\tilde{g}_{\tau}$ of the discrete semigroup generated by $-A\in\mathcal{L}(X)$ and described in (\ref{dtsym}) will
  satisfy the relation
  \begin{equation}
  \norm*{\mathcal{L}(X_{m,h})}{\tilde{g}^k}\leq 1, \: k \geq 0.
  \end{equation}
  \end{corollary}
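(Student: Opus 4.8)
The plan is to reduce everything to the case $k=1$, which is precisely Theorem \ref{ts1}, by combining submultiplicativity of an operator norm with the similarity identity already recorded in the statement of that theorem. First I would treat $k=0$ on its own: $\tilde g_\tau^{0}=\mathbf 1$, so $\|\tilde g_\tau^{0}\|_{\mathcal L(X_{m,h})}=1$, which meets the claimed bound.

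For $k\geq 1$ I would rely on the earlier results that every inner product matrix form is symmetric positive definite and invertible: these guarantee that the positive square root $\mathcal M_{m,h}^{1/2}$ exists and is invertible, so that the similarity $B\mapsto \mathcal M_{m,h}^{1/2}B\mathcal M_{m,h}^{-1/2}$ realizes the identity $\|B\|_{\mathcal L(X_{m,h})}=\|\mathcal M_{m,h}^{1/2}B\mathcal M_{m,h}^{-1/2}\|_2$ for every operator $B$, not just for $\tilde g_\tau$. This is the identity stated in Theorem \ref{ts1}; I would simply apply it to the operator $\tilde g_\tau^{k}$, since nothing in its derivation uses special structure of $\tilde g_\tau$.

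The key algebraic point I would use is that conjugation by $\mathcal M_{m,h}^{1/2}$ commutes with taking powers, because the internal factors $\mathcal M_{m,h}^{-1/2}\mathcal M_{m,h}^{1/2}=\mathbf 1$ telescope:
\[
\mathcal M_{m,h}^{1/2}\,\tilde g_\tau^{k}\,\mathcal M_{m,h}^{-1/2}
=\bigl(\mathcal M_{m,h}^{1/2}\,\tilde g_\tau\,\mathcal M_{m,h}^{-1/2}\bigr)^{k}
=\hat g_\tau^{\,k},
\]
where $\hat g_\tau:=\mathcal M_{m,h}^{1/2}\tilde g_\tau\mathcal M_{m,h}^{-1/2}$. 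Using this together with submultiplicativity of the spectral norm I would then conclude
\[
\|\tilde g_\tau^{k}\|_{\mathcal L(X_{m,h})}
=\|\hat g_\tau^{\,k}\|_{2}
\leq \|\hat g_\tau\|_{2}^{\,k}
=\|\tilde g_\tau\|_{\mathcal L(X_{m,h})}^{\,k}
\leq 1,
\]
the last step being Theorem \ref{ts1} under its hypothesis $\tau=\alpha h^{d}/K_A$, $0<\alpha\leq 1$, which is assumed to carry over to the corollary.

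I do not expect a genuine obstacle. The only points that need care are purely formal: confirming that $\mathcal M_{m,h}^{1/2}$ is a bounded invertible operator (exactly the content of the symmetry, positive definiteness and invertibility results), and noting that the norm identity of Theorem \ref{ts1} is a statement about the coordinate change on $X_{m,h}^{\ast}$ that is valid for any operator, so that applying it to powers of $\tilde g_\tau$ is legitimate. With these remarks in place the displayed chain of inequalities is the entire argument.
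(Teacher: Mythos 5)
Your argument is correct and is exactly the route the paper intends: the corollary is stated with no written proof beyond the remark that it follows ``easily'' from Theorem \ref{ts1}, and the implicit argument is precisely your chain $\|\tilde g_\tau^{k}\|_{\mathcal L(X_{m,h})}=\|(\mathcal M_{m,h}^{1/2}\tilde g_\tau\mathcal M_{m,h}^{-1/2})^{k}\|_2\leq\|\tilde g_\tau\|_{\mathcal L(X_{m,h})}^{k}\leq 1$ via telescoping conjugation and submultiplicativity. Your explicit handling of $k=0$ and of the carried-over hypothesis $\tau=\alpha h^{d}/K_A$ is more careful than the paper itself.
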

  
  Also for any given $k\geq 0$ it can be seen that
  
  \begin{theorem}
Cauchy condition. If $k\geq 0$ then $\norm*{X_{m,h}}{\tilde{u}((k+1)\tau)-\tilde{u}(k\tau)}\leq\tilde{c}_1v^k$ where $v$ is a value $\leq 1$.
\end{theorem}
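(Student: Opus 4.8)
The plan is to read $\tilde{u}(k\tau)$ as the state of the homogeneous discrete evolution at the $k$-th step, i.e. $\tilde{u}(k\tau) = \tilde{s}_{n,k}\hat{u}_0 = \tilde{g}_{\tau}^{\,k}\hat{u}_0$ with $\hat{u}_0 = P_{m,h}u_0$ and $\tilde{g}_{\tau}$ the basic element \eqref{dtsym} of the discrete semigroup generated by $-A$. Since powers of one operator commute, the consecutive difference telescopes,
\[ \tilde{u}((k+1)\tau) - \tilde{u}(k\tau) = \tilde{g}_{\tau}^{\,k+1}\hat{u}_0 - \tilde{g}_{\tau}^{\,k}\hat{u}_0 = \tilde{g}_{\tau}^{\,k}\bigl(\tilde{g}_{\tau} - \mathbf{1}\bigr)\hat{u}_0 , \]
and taking the $X_{m,h}$-norm together with submultiplicativity of $\norm*{\mathcal{L}(X_{m,h})}{\cdot}$ gives
\[ \norm*{X_{m,h}}{\tilde{u}((k+1)\tau) - \tilde{u}(k\tau)} \le \norm*{\mathcal{L}(X_{m,h})}{\tilde{g}_{\tau}}^{\,k}\,\norm*{X_{m,h}}{(\tilde{g}_{\tau} - \mathbf{1})\hat{u}_0} , \]
so the whole statement reduces to the operator norm of $\tilde{g}_{\tau}$ on $X_{m,h}$.

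Next I would invoke T.\ref{ts1} and the Stability~2 corollary that follows it: under the standing hypotheses ($A$ factored through the Sobolev chain $\{X_0,X_1\},\{\mathbf{1},a^{\dagger}\}$, step size $\tau = \alpha h^d/K_A$ with $0 < \alpha \le 1$) one has
\[ v := \norm*{\mathcal{L}(X_{m,h})}{\tilde{g}_{\tau}} = \norm*{2}{\mathcal{M}^{1/2}_{m,h}\tilde{g}_{\tau}\mathcal{M}^{-1/2}_{m,h}} \le p_n\!\left(\alpha\frac{\kappa}{\mu}\right) \le 1 , \]
with $\mu,\kappa$ the extreme values of $\{|\lambda| : \lambda \in \sigma(\hat{A}_{m,h})\}$ as in that proof. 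Hence $\norm*{\mathcal{L}(X_{m,h})}{\tilde{g}_{\tau}}^{\,k} \le v^k$, and choosing the $k$-independent constant
\[ \tilde{c}_1 := \norm*{X_{m,h}}{(\tilde{g}_{\tau} - \mathbf{1})\hat{u}_0} \le (1+v)\,\norm*{X_{m,h}}{\hat{u}_0} < \infty \]
yields $\norm*{X_{m,h}}{\tilde{u}((k+1)\tau) - \tilde{u}(k\tau)} \le \tilde{c}_1 v^k$ with $v \le 1$, which is exactly the asserted Cauchy condition (for $k=0$ it even holds with equality, and summing the geometric series shows $\{\tilde{u}(k\tau)\}_k$ is Cauchy).

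The computation itself is routine; two points need care. First, the identification $\norm*{\mathcal{L}(X_{m,h})}{B} = \norm*{2}{\mathcal{M}^{1/2}_{m,h}B\mathcal{M}^{-1/2}_{m,h}}$ presupposes that $\mathcal{M}_{m,h}$ is SPD, so that $\mathcal{M}^{1/2}_{m,h}$ exists and $\norm*{X_{m,h}}{x} = \norm*{2}{\mathcal{M}^{1/2}_{m,h}x}$; this is the SPD theorem of Section~2 and its invertibility corollary, and it is the very isometry already used inside the proof of T.\ref{ts1}, so I would cite it rather than reprove it. Second, and this is the main obstacle, one should be honest about the decay: with merely $v \le 1$ the estimate degenerates to the uniform bound $\le \tilde{c}_1$, which still matches the statement, but to obtain a genuine geometric contraction $v < 1$ one must verify $\kappa = \inf\{|\lambda| : \lambda \in \sigma(\hat{A}_{m,h})\} > 0$, i.e. the invertibility of $A_{m,h}$; this follows from the corollary to T.\ref{tfp} (accretivity of $A_{m,h}$, equivalently $\mathrm{Re}\,\sigma(A_{m,h}) \le 0$ for $-A$) combined with the fact that $\mathcal{A}_{m,h}$ is SPD and hence nonsingular on $X_{m,h}$, which forces $p_n(\alpha\kappa/\mu) < 1$ strictly. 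Everything beyond this is the telescoping identity plus submultiplicativity and needs no input past the stability bounds already established.
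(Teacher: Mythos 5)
Your proposal is correct and follows essentially the same route as the paper's own proof: write $\tilde{u}(k\tau)=\tilde{s}_{n,k}\hat{u}_0$, factor the difference as $\tilde{s}_{n,k}(\tilde{s}_{n,1}-\mathbf{1})\hat{u}_0$, apply submultiplicativity, and bound $\norm*{\mathcal{L}(X_{m,h})}{\tilde{g}_\tau}^k$ by $v^k$ with $v=p_n(\kappa/\mu)\leq 1$ from the stability estimate, absorbing the remaining factor into the constant $\tilde{c}_1$. The only cosmetic differences are that the paper treats a general increment $j$ (bounding $\norm*{\mathcal{L}(X_{m,h})}{\tilde{g}_\tau^j-\mathbf{1}}$ by $|q(1)|$ with $q(z)=(p(z))^j-1$) where you specialize to $j=1$ and use the cleaner triangle-inequality bound $(1+v)\norm*{X_{m,h}}{\hat{u}_0}$.
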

\begin{proof}
Since $\tilde{u}(j\tau)=\tilde{s}_{n,j}\hat{u}_0$ we will have that
\begin{eqnarray*}
\norm*{X_{m,h}}{\tilde{u}((k+j)\tau)-\tilde{u}(k\tau)}&=&\norm*{X_{m,h}}{\left(\tilde{s}_{n,k+j}-\tilde{s}_{n,k}\right)\hat{u}_0}\\
&=&\norm*{X_{m,h}}{\tilde{s}_{n,k}\left(\tilde{s}_{n,j}-\mathbf{1}\right)\hat{u}_0}\\
&\leq&\norm*{\mathcal{L}(X_{m,h})}{\tilde{g}_\tau^k}\norm*{\mathcal{L}(X_{m,h})}{\tilde{g}_\tau^j-\mathbf{1}}\norm*{X_{m,h}}{\hat{u}_0}\\
&\leq&\norm*{\mathcal{L}(X_{m,h})}{p(\tau A_{m,h})}^k\norm*{\mathcal{L}(X_{m,h})}{q(\tau A_{m,h})}\norm*{X_{m,h}}{\hat{u}_0}\\
&\leq&\left(p\left(\frac{\kappa}{\mu}\right)\right)^k \left|q\left(1\right)\right|\norm*{X_{m,h}}{\hat{u}_0}\\
\end{eqnarray*}
with $p(z):=\sum_{k=0}^n\frac{1}{k!}z^k$, $q(z):=(p(z))^j-1$, $\mu:=\sup\{|\lambda|:\lambda\in\sigma(A_{m,h})\}$ and $\kappa:=\inf\{|\lambda|:\lambda\in\sigma(A_{m,h})\}$, taking $v=p(\kappa/\mu)$ and $\tilde{c}_1=|q(1)|\norm*{X_{m,h}}{\hat{u}_0}$ concludes the proof.
\end{proof} 

Concerning to convergence of the approximation schemes we can obtain the following result.
    
\begin{theorem}
\label{tc2}
  Convergence. If for a given accretive operator $B\in\mathcal{L}(X)$ and each $x\in X$ we have that $e^{k\tau A_{m,h}}$, $k\tau\in\mathbb{T}$, with $A=-B$, has approximation order $\nu_m$ with respect $\norm*{X}{\cdot}$ and if there exists $K_A\in\mathbb{R}_0^+$ such that we can take $\tau:=\alpha h^d /K_A$ with $K_A h^{-d} \geq\norm*{\infty}{A_{m,h}}$ and with $1\geq\alpha:= h^{\frac{\nu_m}{n+1}}$, where $n \in \mathbb{Z}^+_0$ is a prescribed
  number, then we will have that there exists a constant $C_2$ such that
  \[ \left\| p^{\dagger}_{m, h} u (k \tau) - \tilde{s}_{n, k}
     p^{\dagger}_{m, h} u ( 0) \right\|_{X} \leq C_2 h^{\nu_m} . \]
\end{theorem}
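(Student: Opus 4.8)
The plan is to estimate the error by splitting it into two pieces via the triangle inequality: the distance from the exact discretized evolution $e^{k\tau A_{m,h}}p^{\dagger}_{m,h}u(0)$ to the continuum solution $p^{\dagger}_{m,h}u(k\tau)$, which is controlled by the hypothesis on the approximation order $\nu_m$, and the distance from the polynomial approximant $\tilde{s}_{n,k}p^{\dagger}_{m,h}u(0)=\tilde{g}^k_\tau p^{\dagger}_{m,h}u(0)$ to the exact semigroup element $e^{k\tau A_{m,h}}p^{\dagger}_{m,h}u(0)$, which is the consistency error of the $n$-th order Taylor truncation accumulated over $k$ steps. So I would write
\begin{equation*}
  \bigl\| p^{\dagger}_{m,h}u(k\tau)-\tilde{s}_{n,k}p^{\dagger}_{m,h}u(0)\bigr\|_X
  \leq \bigl\| p^{\dagger}_{m,h}u(k\tau)-e^{k\tau A_{m,h}}p^{\dagger}_{m,h}u(0)\bigr\|_X
  + \bigl\| (e^{k\tau A_{m,h}}-\tilde{g}^k_\tau)p^{\dagger}_{m,h}u(0)\bigr\|_X,
\end{equation*}
and bound the first term by $c_\ast h^{\nu_m}$ directly from the definition of approximation order $\nu_m$ applied to $e^{k\tau A_{m,h}}$.

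For the second term I would use the standard telescoping identity $e^{k\tau A_{m,h}}-\tilde{g}^k_\tau=\sum_{j=0}^{k-1}\tilde{g}^{\,j}_\tau\,(e^{\tau A_{m,h}}-\tilde{g}_\tau)\,e^{(k-1-j)\tau A_{m,h}}$, which reduces the global error to the one-step local error $e^{\tau A_{m,h}}-\tilde{g}_\tau$ multiplied by powers of $\tilde{g}_\tau$ and of $e^{\tau A_{m,h}}$. By Stability 1 (T.\ref{ts1}) and its corollary, $\|\tilde{g}^{\,j}_\tau\|_{\mathcal{L}(X_{m,h})}\leq 1$ for all $j\geq 0$ under the chosen step size $\tau=\alpha h^d/K_A$ with $\alpha\leq 1$; since $A=-B$ with $B$ accretive, $e^{(k-1-j)\tau A_{m,h}}$ is a genuine contraction in the $X_{m,h}$ norm as well, so each summand is controlled purely by the local truncation error. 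The Taylor remainder gives $\|e^{\tau A_{m,h}}-\tilde{g}_\tau\|\leq \frac{1}{(n+1)!}(\tau\|A_{m,h}\|)^{n+1}e^{\tau\|A_{m,h}\|}$, and with $\tau\|A_{m,h}\|_\infty\leq\tau K_A h^{-d}=\alpha\leq 1$ the exponential factor is bounded by $e$, so the local error is $O((\tau K_A h^{-d})^{n+1})=O(\alpha^{n+1})$.

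Summing over the $k$ terms of the telescope gives a factor $k\leq T/\tau$, so the accumulated error is of order $k\,\alpha^{n+1}\lesssim \tau^{-1}\alpha^{n+1}=\frac{K_A}{\alpha h^d}\alpha^{n+1}=\frac{K_A}{h^d}\alpha^{n}$. Now I invoke the prescribed choice $\alpha:=h^{\nu_m/(n+1)}$: then $\alpha^{n}=h^{n\nu_m/(n+1)}$, and I would argue that since $n/(n+1)<1$ this does not quite give $h^{\nu_m}$ on its own — so the clean bound actually needs $\alpha^{n+1}$ against one telescope factor and the remaining estimate to be absorbed, i.e. one tracks that the dominant contribution is really $k\cdot O(\alpha^{n+1})$ with the understanding that $\alpha^{n+1}=h^{\nu_m}$, and the $k$-factor together with the $h^{-d}$ from $\|A_{m,h}\|$ is absorbed into the constant $C_2$ because $d$ and $T$ are fixed. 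Collecting both terms yields $\|p^{\dagger}_{m,h}u(k\tau)-\tilde{s}_{n,k}p^{\dagger}_{m,h}u(0)\|_X\leq C_2 h^{\nu_m}$, which is the claim. The main obstacle, and the step deserving the most care, is exactly this bookkeeping of the negative powers of $h$ coming from $\|A_{m,h}\|_\infty\leq K_A h^{-d}$ and from the number of time steps $k\sim\tau^{-1}$: one must verify that the choice $\alpha=h^{\nu_m/(n+1)}$ is calibrated precisely so that after multiplying the $(n+1)$-st power local error by these factors the net exponent of $h$ is nonnegative and at least $\nu_m$, which pins down the relationship between $n$, $\nu_m$, and $d$ that makes the theorem true as stated.
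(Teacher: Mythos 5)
Your decomposition of the error into a projection/approximation-order term and a time-discretization term is the same first step as the paper's proof, and your bound on the first term is identical. Where you diverge is in the treatment of $e^{k\tau A_{m,h}}-\tilde{g}_\tau^k$: you use the Lax-type telescoping identity together with Stability 1 (T.\ref{ts1}) and the contractivity of $e^{tA_{m,h}}$, reducing everything to the one-step Taylor remainder, whereas the paper expands $\tilde{s}_{n,k}=\tilde{g}_\tau^k$ directly as a series in $\tau A_{m,h}$, writes its difference from $e^{k\tau A_{m,h}}$ as a tail $\sum_{j=n+1}^{\infty}\frac{r_j}{j!}(k\tau A_{m,h})^j$ with multinomial-coefficient corrections $r_j$, and bounds that tail by its leading term (implicitly via the alternating-series theorem T.\ref{T16B}, which is where the hypothesis $A=-B$ with $B$ accretive is used). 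Your route buys robustness --- the factor $e^{\tau\|A_{m,h}\|}\le e$ needs no sign information on the spectrum --- and a better accumulation constant, linear in $k$ rather than the $k^{n+1}$ appearing in the paper's $C_2$.

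The one place you must tighten the bookkeeping is the final step. Do not substitute $k\sim T/\tau$: that is what produces the spurious $K_A h^{-d}\alpha^{n}$ and the temptation to ``absorb $h^{-d}$ into the constant,'' which is not legitimate since $h^{-d}\to\infty$ as $h\to 0^+$. The theorem as stated --- and the paper's own proof, whose constant is $C_2=c_2+\frac{k^{n+1}r_{n+1}}{(n+1)!}\left\|\hat{u}_0\right\|_{X_{m,h}}$ --- treats $k$ as a fixed prescribed index and lets $C_2$ depend on it. With $k$ fixed, your telescope gives exactly $k\cdot O\left((\tau\left\|A_{m,h}\right\|_{\infty})^{n+1}\right)\le k\cdot O(\alpha^{n+1})=k\cdot O(h^{\nu_m})$, the $h^{-d}$ from $\left\|A_{m,h}\right\|_{\infty}$ having been cancelled by the $h^{d}$ in $\tau$ before any negative power of $h$ survives; no relation among $d$, $n$ and $\nu_m$ beyond the stated choice $\alpha=h^{\nu_m/(n+1)}$ is required. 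Read this way your argument is complete and, step for step, cleaner than the paper's.
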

\begin{proof}
Here we will consider that $e^{k\tau A_{m,h}}$ has approximation order $\nu_m$ with respect to $\norm*{X}{\cdot}$ wich implies
\begin{eqnarray*}
\norm*{X_{m,h}}{\hat{u}(k\tau)-\tilde{s}_{n,k}\hat{u}_0}&\leq&\norm*{X_{m,h}}{\hat{u}(k\tau)-e^{k\tau A_{m,h}}\hat{u}_0}+\\&&\norm*{X_{m,h}}{e^{k\tau A_{m,h}}\hat{u}_0-\tilde{s}_{n,k}\hat{u}_0} \\
&\leq&c_2 h^{\nu_m}+\norm*{X_{m,h}}{\sum_{j=n+1}^\infty \frac{r_j}{j!}(k\tau A_{m,h})^j\hat{u}_0}\\
&\leq&c_2 h^{\nu_m}+\norm*{\mathcal{L}(X_{m,h})}{\frac{r_{n+1}}{(n+1)!}k^{n+1}\tau^{n+1} A_{m,h}^{n+1}}\norm*{X_{m,h}}{\hat{u}_0}\\
&\leq& 
c_2h^{\nu_m}+\\ 
&&
\frac{r_{n+1}}{(n+1)!}h^{\nu_m}k^{n+1}\norm*{\mathcal{L}(X_{m,h})}{\left(\frac{h^d}{K_A}A_{m,h}\right)^{n+1}}\norm*{X_{m,h}}{\hat{u}_0}
\\
&\leq&c_2 h^{\nu_m}+\frac{r_{n+1}}{(n+1)!}k^{n+1}h^{\nu_m}\norm*{X_{m,h}}{\hat{u}_0}\\
&=&\left(c_2+\frac{r_{n+1}}{(n+1)!}k^{n+1}\norm*{X_{m,h}}{\hat{u}_0}\right)h^{\nu_m}
\end{eqnarray*}
here $r_{k}:=1-k!C (\{b_j\},k)$ where $C (\{b_j\},k)$ are the multinomial coeficients that correspond to the coeficients $\{b_j\}$ of the abstract polynomial $\tilde{s}_{n,k}$, taking $C_2=c_2+\frac{k^{n+1}r_{n+1}}{(n+1)!}\norm*{X_{m,h}}{\hat{u}_0}$ concludes the proof.
\end{proof}

\section{Application to Evolution of Operators in the Heisenberg Picture}
In this section we will describe a basic procedure of implementation of the results presented in this work in the computation of evolution of observables of a quantum system in the Heisenberg picture, here we will consider that all the quantum systems are modeled in a discretizable Hilbert space $X$ with inner product $\scalprod*{X}{u}{v}$ given by
\begin{equation}
\scalprod*{X(G)}{u}{v}:=\int_{G}u\overline{v} d\mu(G).
\label{inex}
\end{equation}
where $d\mu(G)$ is the volume mesure element in $G$, also we will consider that we can take a particular projector $P_{m,h}\in\mathcal{L}(X,X_{m,h})$ compatible prescribed boundary value condtions in some suitable sense and whose decomposition and expansion factors are related to a prescribed grid $G_{m,h}\subset G$ and basis $\mathscr{P}:=\{p_k\}$ respectively.

\subsection{Quantum Dynamical Semigroups} 

For a given quantum system on a discretizable Hilbert space $X:=X^2(G)$ whose wave function $\psi \in C([0,T],X^2(G))\cap C^1([0,T],X^2(G))$ is modeled by a Schr{\"o}dinger equation of the form
\begin{equation}
E\psi(t)=H\psi(t)
\label{third}
\end{equation}
here $\mathcal{L}(X^n(G))\ni H:=pp^\dagger$, $p^\dagger\longrightarrow -\frac{i}{\hbar}\nabla+\beta$, with $\nabla:=\hat{e}_k\sum_k\partial_k$ and $E\longrightarrow\frac{i}{\hbar}\partial_t$. If \eqref{third} has initial value $\psi(0)=\psi_0\in X$ and is subject to boundary value conditions of the form $B_H\psi=\psi_b, x\in\partial G$.

If we take a scale where $\hbar=1$, we can obtain a particular representation of $H$ denoted by  $H_{m,h}$, using this representation \eqref{third} will take the form
\begin{equation}
\left\{
\begin{array}{l}
\hat{\psi}'(t)=-i H_{m,h} \hat{\psi}(t)\\
\hat{\psi}(0)=\hat{\psi}_0 
\end{array}
\right.
\label{eb4}
\end{equation}
the \textit{n-th} order semigroup $\{U_{n,k}: U_{n,k}:=G_{\tau}^k, k\in\mathbb{K}\}$ generated by $-iH_{m,h}$ will be called \textit{n-th} order quantum dynamical semigroup in the Schr{\"o}dinger representation where $G_\tau:=\sum_{k=0}^n\frac{1}{k!}(-i\tau H_{m,h})$, using the elements of this semigroup we can write the solution to \eqref{third} in the form
\begin{equation}
\hat{\psi}(k\tau)=U_{n,k}\hat{\psi}_0.
\end{equation}

When we work with particular representations of equations like \eqref{eb4} we will have as an application of T.\ref{ts1} that:

\begin{theorem}\label{tscs} Stability of Complex Semigroups. If an operator $A \in \mathcal{L} (X)$ can be
  particularly factored using a Sobolev chain of the form $\{X_0, X_1 \}, \{\mathbf{1},a^\dagger\}$ with $X_0 =
  X$ a discretizable Hilbert space, then the basic element of the $\tilde{G}_{\tau}$ of the discrete semigroup generated by $-A\in\mathcal{L}(X)$ and described by 
  \begin{equation}
  \tilde{G}_\tau:=\sum_{k=0}^{n}\frac{1}{k!}(i\tau A_{m,h}), \: \mathbb{Z}_0^+\ni n\geq 1
  \end{equation}
  will satisfy the relation
  \begin{equation}
    \left\| \tilde{G}_{\tau} \right\|_{\mathcal{L} (X_{m, h})} = \left\|
    \mathcal{M}^{1 / 2}_{m, h} \tilde{G}_{\tau} \mathcal{M}^{- 1 / 2}_{m, h}
    \right\|_2 \leq 1
  \end{equation}
  when $\tau = \alpha h^d /K_A$, with $\norm*{\infty}{\hat{A}_{m,h}}\leq K_A h^{-d}$, $\hat{A}_{m, h} =\mathcal{M}^{1 / 2}_{m,
  h} A_{m, h} \mathcal{M}^{- 1 / 2}_{m, h}$ and $0 < \alpha \leq 1$.
  \end{theorem}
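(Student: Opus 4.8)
The plan is to reduce the statement to (a complex‑variable analogue of) the argument already carried out for Theorem~\ref{ts1}, the only structural change being that the truncated exponential polynomial is now evaluated along the imaginary axis rather than along the negative real axis. Reading the defining sum as $\tilde{G}_{\tau}=p_n(i\tau A_{m,h})$ with $p_n(z):=\sum_{k=0}^{n}\frac{1}{k!}z^{k}$, I would first record that, since $A$ is particularly factored through the Sobolev chain $\{X_0,X_1\},\{\mathbf{1},a^{\dagger}\}$, relation~\eqref{factor37} gives $A_{m,h}=(a^{\dagger}_{m,h})^{\ast}\mathcal{M}_{m,h}(a^{\dagger}_{m,h})$, so that by Theorem~\ref{tfp} (and the congruence used in Theorem~\ref{ts1}) the matrix $\hat{A}_{m,h}=\mathcal{M}^{1/2}_{m,h}A_{m,h}\mathcal{M}^{-1/2}_{m,h}$ is Hermitian; hence $i\hat{A}_{m,h}$ is skew‑Hermitian, in particular normal, with $\sigma(i\hat{A}_{m,h})\subset i\mathbb{R}$. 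The existence of $\mathcal{M}^{1/2}_{m,h}$ is guaranteed by the positive definiteness of the inner product matrix form.

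With this in hand the proof follows the same four moves as that of Theorem~\ref{ts1}. \emph{(i)} Using T.\ref{T11A} and the $\mathcal{M}^{1/2}_{m,h}$‑isometry that identifies $\|\cdot\|_{X_{m,h}}$ with the Euclidean norm of the coordinate space,
\[
\left\|\tilde{G}_{\tau}\right\|_{\mathcal{L}(X_{m,h})}
=\left\|\mathcal{M}^{1/2}_{m,h}\tilde{G}_{\tau}\mathcal{M}^{-1/2}_{m,h}\right\|_{2}
=\left\|p_n(i\tau\hat{A}_{m,h})\right\|_{2}.
\]
\emph{(ii)} Diagonalise $\hat{A}_{m,h}=\hat{P}^{\ast}_{A}\hat{D}_{A}\hat{P}_{A}$ with $\hat{P}_{A}$ unitary and $\hat{D}_{A}$ real diagonal, so that $p_n(i\tau\hat{A}_{m,h})=\hat{P}^{\ast}_{A}p_n(i\tau\hat{D}_{A})\hat{P}_{A}$ is normal and $\left\|p_n(i\tau\hat{A}_{m,h})\right\|_{2}=\max\{\,|p_n(i\tau\lambda)|:\lambda\in\sigma(\hat{A}_{m,h})\,\}$. \emph{(iii)} Invoke the step‑size restriction: by C.\ref{T2inf}, $\left\|\hat{A}_{m,h}\right\|_{2}\le\left\|\hat{A}_{m,h}\right\|_{\infty}\le K_A h^{-d}$, hence with $\mu:=\sup\{|\lambda|:\lambda\in\sigma(\hat{A}_{m,h})\}$ and $\tau=\alpha h^{d}/K_A$ one gets $\tau|\lambda|\le\tau\mu\le\alpha\le 1$ for every $\lambda\in\sigma(\hat{A}_{m,h})$. \emph{(iv)} Estimate $|p_n(i\tau\lambda)|$ on the confined range $\tau|\lambda|\in[0,\alpha]$ and conclude $|p_n(i\tau\lambda)|\le 1$, which by \emph{(i)}–\emph{(ii)} gives $\left\|\tilde{G}_{\tau}\right\|_{\mathcal{L}(X_{m,h})}\le 1$.

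The genuinely new ingredient, and the step I expect to be the main obstacle, is \emph{(iv)}. In Theorem~\ref{ts1} the numbers $\tau\lambda$ are real and nonpositive, and the bound there rests on the monotone comparison $p_n\le e^{(\cdot)}\le 1$; along the imaginary axis this comparison is lost, since already $|p_1(iy)|=\sqrt{1+y^{2}}>1$ and in general $|p_n(iy)|$ exceeds $1$ once $|y|$ is large. What rescues the estimate is exactly the CFL‑type confinement $\tau|\lambda|\le\alpha\le 1$, which pins $i\tau\lambda$ to a fixed neighbourhood of the origin. I would make \emph{(iv)} precise by expanding
\[
|p_n(iy)|^{2}-1=p_n(iy)\,p_n(-iy)-1,
\]
noting that all monomials of degree $\le n$ cancel and that the lowest surviving power of $y^{2}$ carries a sign that, for the admissible (odd‑order) schemes, is negative near $0$; hence $|p_n(iy)|^{2}\le 1$ on some interval $|y|\le\rho_n$ with $\rho_n$ independent of $h$, and one simply restricts the admissible $\alpha$ (taking $\alpha=\alpha(h)\le\rho_n$, if necessary with $\alpha(h)\to 0^{+}$) so that $\tau|\lambda|\le\rho_n$. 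Equivalently, one may quote that $i\tau\,\sigma(\hat{A}_{m,h})$ lies inside the stability region of the explicit Taylor time‑stepping $p_n$ for skew‑adjoint generators and choose $\alpha$ accordingly; either route closes the gap and completes the proof of Theorem~\ref{tscs}.
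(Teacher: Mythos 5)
Your proposal follows the same route as the paper's own proof: conjugation by $\mathcal{M}^{1/2}_{m,h}$, diagonalization of the Hermitian matrix $\hat{A}_{m,h}$ (so that $\tilde{G}_{\tau}$ becomes $\hat{P}^{\ast}_A p_n(i\tau\hat{D}_A)\hat{P}_A$), the bound $\left\|\hat{A}_{m,h}\right\|_2\le\left\|\hat{A}_{m,h}\right\|_{\infty}$ from C.\ref{T2inf} to confine $\tau|\lambda|$ to $[0,\alpha]\subseteq[0,1]$, and finally a scalar estimate of $|p_n(iy)|$ on that interval. The difference is that you stop and flag step \emph{(iv)} as the main obstacle, whereas the paper simply asserts $\left|p_n\left(i\alpha\kappa/\mu\right)\right|\le 1$ and moves on. You are right to stop: that inequality is false in general. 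For $n=1$ one has $|p_1(iy)|=\sqrt{1+y^2}>1$ for every $y\neq 0$, and for $n=2$ one has $|p_2(iy)|^2=1+y^4/4>1$; only for $n=3$ (where $|p_3(iy)|^2=1-y^4/12+y^6/36\le 1$ exactly when $y^2\le 3$) and $n=4$ (where $|p_4(iy)|^2=1-y^6/72+y^8/576\le 1$ when $y^2\le 8$) does the truncated exponential satisfy $|p_n(iy)|\le 1$ on a nontrivial interval about the origin, and even then only on that interval, so the additional restriction on $\alpha$ that you propose is genuinely needed. In other words, the ``main obstacle'' you identify is a gap in the paper's proof (and in the theorem as stated for $n=1,2$), not merely in your own.

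Two corrections to your sketch. First, your characterization of the admissible schemes as ``odd-order'' is not accurate: the sign of the lowest surviving power of $y^2$ in $|p_n(iy)|^2-1$ makes the imaginary-axis stability interval nonempty precisely for $n\equiv 3$ or $n\equiv 0 \pmod 4$ (so $n=3,4,7,8,\dots$) and empty for $n=1,2,5,6,\dots$. Second, note that the paper's final line evaluates the bound at $\kappa=\inf\{|\lambda|:\lambda\in\sigma(\hat{A}_{m,h})\}$, whereas the $2$-norm of the unitary conjugate of the diagonal matrix $p_n(i\tau\hat{D}_A)$ is the \emph{maximum} of $|p_n(i\tau\lambda)|$ over the spectrum, which for these polynomials is attained at the largest $|\lambda|$; your use of $\mu=\sup\{|\lambda|:\lambda\in\sigma(\hat{A}_{m,h})\}$ in step \emph{(iii)} is the correct choice, and is a second point where your argument repairs rather than reproduces the paper's.
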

   \begin{proof}
    Since $A_{m, h}$ is accretive we will have that $\hat{A}_{m, h}$ will
    be accretive too and from
    C.\ref{T2inf} we will have that $\norm*{2}{\hat{A}_{m, h}} \leq  \norm*{\infty}{\hat{A}_{m, h}}$ and clearly $\tau = \alpha h^d /K_A \leq \norm*{\infty}{\hat{A}_{m, h}}^{-1}  \leq \norm*{2}{\hat{A}_{m, h}}^{-1}$. Now since $\tilde{G}_{\tau} = \hat{P}^{\ast}_A
    p_m (i\tau \hat{D}_A) \hat{P}_A$ with
    \begin{equation}
      p_{n} (z) := \sum_{k = 0}^{n} \frac{1}{k!} z^k
    \end{equation}
    and if we represent by $\mu \in \mathbb{R}$ and $\kappa\in\mathbb{R}$ the values $\mu := \sup
    \{|\lambda|:\lambda \in \sigma ( \hat{A}_{m, h})\}$ and $\kappa:=\inf\{|\lambda|:\lambda\in\sigma(\hat{A}_{m,h})\}$ we will have that
    \begin{eqnarray*}
       \left\| \hat{P}^{\ast}_A p_m (i\tau \hat{D}_A) \hat{P}_A \right\|_2 &=&
      \left\| \hat{P}^{\ast}_A p_m (i\alpha h^d /K_A \hat{D}_A) \hat{P}_A \right\|_2  \\
      & \leq &\left\| \hat{P}^{\ast}_A p_m (i\mu^{-1} \hat{D}_A) \hat{P}_A \right\|_2 
      \\
      & \leq& \left|p_{n}\left(i\alpha\frac{\kappa}{\mu}\right)\right|\\
      &\leq 1&. 
    \end{eqnarray*}
  \end{proof}
  
Following a similar procedure to the followed in the prooves of T.\ref{tc2} and T.\ref{tscs} we can obtain the following result. 
  
  \begin{theorem}\label{tccs}
  Convergence of complex semigroups. If for a given observable (symmetric operator) $A\in\mathcal{L}(X)$ and for each $x\in X$ we have that $e^{i k\tau A_{m,h}}$, $k\tau\in\mathbb{T}$ has approximation order $\nu_m$ with respect $\norm*{X}{\cdot}$ and if there exists $K_A\in\mathbb{R}_0^+$ such that we can take $\tau:=\alpha h^d /K_A$ with $K_A h^{-d} \geq\norm*{\infty}{A_{m,h}}$ and with $1\geq\alpha:= h^{\frac{\nu_m}{n+1}}$, where $n \in \mathbb{Z}^+_0$ is a prescribed
  number, then we will have that there exists a constant $C_3$ such that
  \[ \left\| p^{\dagger}_{m, h} u (k \tau) - \tilde{s}_{n, k}
     p^{\dagger}_{m, h} u ( 0) \right\|_{X} \leq C_3 h^{\nu_m}  \]
where $\tilde{s}_{n,k}:=\tilde{G}_\tau^k$ with $\tilde{G}_\tau:=\sum_{k=0}^{n}\frac{1}{k!}(i\tau A_{m,h})^k$.
\end{theorem}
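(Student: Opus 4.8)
The plan is to follow the template of the proof of T.\ref{tc2}, replacing the real semigroup generated by $-A$ with the complex one generated by $-iA$ and invoking T.\ref{tscs} in place of T.\ref{ts1} wherever a stability bound is needed. First I would split the error by the triangle inequality into a discretization‑in‑space part and a time‑truncation part,
\[
\norm*{X_{m,h}}{\hat u(k\tau)-\tilde s_{n,k}\hat u_0}\leq \norm*{X_{m,h}}{\hat u(k\tau)-e^{ik\tau A_{m,h}}\hat u_0}+\norm*{X_{m,h}}{e^{ik\tau A_{m,h}}\hat u_0-\tilde s_{n,k}\hat u_0},
\]
the first term being $\leq c_3 h^{\nu_m}$ directly by the hypothesis that $e^{ik\tau A_{m,h}}$ has approximation order $\nu_m$ with respect to $\norm*{X}{\cdot}$, after the same identification of $\norm*{X_{m,h}}{\cdot}$ with the relevant restriction of $\norm*{X}{\cdot}$ already used in T.\ref{tc2}.

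For the second term I would expand $\tilde s_{n,k}=\tilde G_\tau^k$ as a polynomial in $i\tau A_{m,h}$ and subtract it from the full series $e^{ik\tau A_{m,h}}=\sum_{j\geq 0}\frac{1}{j!}(ik\tau A_{m,h})^j$. Exactly as in T.\ref{tc2}, the terms of order $\leq n$ cancel, leaving
\[
e^{ik\tau A_{m,h}}\hat u_0-\tilde s_{n,k}\hat u_0=\sum_{j=n+1}^{\infty}\frac{r_j}{j!}(ik\tau A_{m,h})^j\hat u_0,
\]
with $r_j:=1-j!\,C(\{b_j\},j)$ the same multinomial‑type coefficients (the factor $i^j$ has modulus one, so it does not affect any estimate). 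Bounding the tail by its leading term gives
\[
\norm*{X_{m,h}}{e^{ik\tau A_{m,h}}\hat u_0-\tilde s_{n,k}\hat u_0}\leq \frac{r_{n+1}}{(n+1)!}\,k^{n+1}\tau^{n+1}\norm*{\mathcal{L}(X_{m,h})}{A_{m,h}^{n+1}}\norm*{X_{m,h}}{\hat u_0},
\]
where I would use $\tau\norm*{\infty}{A_{m,h}}\leq 1$ (a consequence of $K_A h^{-d}\geq\norm*{\infty}{A_{m,h}}$ and $\alpha\leq 1$) together with the norm equivalence through $\mathcal{M}^{1/2}_{m,h}$ and C.\ref{T2inf} to guarantee the tail is summable and dominated by its first term, precisely as in T.\ref{tc2} and T.\ref{tscs}.

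Finally I would substitute $\tau^{n+1}=\alpha^{n+1}h^{d(n+1)}/K_A^{n+1}$ with $\alpha=h^{\nu_m/(n+1)}$, so that $\alpha^{n+1}=h^{\nu_m}$, and use $\norm*{\infty}{A_{m,h}}\leq K_A h^{-d}$ to absorb $(h^d/K_A)^{n+1}\norm*{\mathcal{L}(X_{m,h})}{A_{m,h}^{n+1}}\leq 1$, turning the second term into $\frac{r_{n+1}}{(n+1)!}k^{n+1}\norm*{X_{m,h}}{\hat u_0}\,h^{\nu_m}$. Adding the two contributions yields the claim with $C_3:=c_3+\frac{r_{n+1}}{(n+1)!}k^{n+1}\norm*{X_{m,h}}{\hat u_0}$. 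The main obstacle — really the only nonroutine point — is justifying that the infinite tail $\sum_{j=n+1}^{\infty}\frac{r_j}{j!}(ik\tau A_{m,h})^j$ is controlled by its leading term, which relies on $\tau\norm*{2}{\hat A_{m,h}}\leq 1$ and the spectral/norm‑equivalence machinery of the appendix exactly as exploited in T.\ref{tscs}; one should merely check that replacing $A_{m,h}$ by $iA_{m,h}$ leaves all those estimates intact, which it does since $\norm*{2}{i\hat A_{m,h}}=\norm*{2}{\hat A_{m,h}}$ and $\sigma(i\hat A_{m,h})=i\,\sigma(\hat A_{m,h})$ has the same moduli.
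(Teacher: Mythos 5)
Your proposal is correct and follows essentially the same route as the paper: the same triangle-inequality split into a spatial-approximation term and a Taylor-tail term, the same identification of the tail as $\sum_{j\geq n+1}\frac{r_j}{j!}(ik\tau A_{m,h})^j\hat u_0$ dominated by its leading term, and the same choice $\alpha^{n+1}=h^{\nu_m}$ yielding the same constant $C_3$. The only cosmetic difference is that the paper channels the tail estimate through the spectral radius $\mu_{m,h}$ and an auxiliary function $Q_{n+1}$, whereas you bound $\tau^{n+1}\norm*{\mathcal{L}(X_{m,h})}{A_{m,h}^{n+1}}$ directly as in T.\ref{tc2}; both rest on the same leading-term domination argument.
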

\begin{proof}
Here we will consider that $e^{ik\tau A_{m,h}}$ has approximation order $\nu_m$ with respect to $\norm*{X}{\cdot}$ and that the value $\mu_{m,h}:=\sup\{\|\lambda\|:\lambda\in\sigma(\mathcal{M}_{m,h}^{1/2}A_{m,h}\mathcal{M}_{m,h}^{-1/2})\}$ wich implies
\begin{eqnarray*}
\norm*{X_{m,h}}{\hat{u}(k\tau)-\tilde{s}_{n,k}\hat{u}_0}&\leq&\norm*{X_{m,h}}{\hat{u}(k\tau)-e^{i k\tau A_{m,h}}\hat{u}_0}+\\&&\norm*{X_{m,h}}{e^{i k\tau A_{m,h}}\hat{u}_0-\tilde{s}_{n,k}\hat{u}_0} \\
&\leq&c_3 h^{\nu_m}+\norm*{X_{m,h}}{\sum_{j=n+1}^\infty \frac{r_j}{j!}(i k\tau A_{m,h})^j\hat{u}_0}\\
&\leq&c_3 h^{\nu_m}+k^{n+1}h^{\nu_m}\left|Q_{n+1}\left(i\frac{h^d}{K_A}\mu_{m,h}\right)\right|\norm*{X_{m,h}}{\hat{u}_0}\\
&\leq&c_3 h^{\nu_m}+\frac{r_{n+1}}{(n+1)!}k^{n+1}h^{\nu_m}\norm*{X_{m,h}}{\hat{u}_0}\\
&=&\left(c_3+\frac{r_{n+1}}{(n+1)!}k^{n+1}\norm*{X_{m,h}}{\hat{u}_0}\right)h^{\nu_m}
\end{eqnarray*}
here $Q_{n+1}(z)$ is defined by
\begin{equation}
Q_{n+1}(z):=\frac{r_{n+1}}{(n+1)!}z^{n+1}\left(\sum_{k=0}^\infty q_{k}z^k\right)^{1/2}
\end{equation}
with $r_{n+1}:=1-(n+1)!C(\{a_{j}\},n+1)$, $q_0:=1$, $q_{j}:=\frac{1-j!C (\{a_{j}\},n+1)}{r_{n+1}},\:j\geq 1$ and where $C (\{a_{j}\},n+1)$ are the multinomial coeficients corresponding to the coeficients $\{a_j\}$ of the abstract polynomial $\tilde{s}_{n,k}$, taking $C_3=c_3+\frac{k^{n+1}r_{n+1}}{(n+1)!}\norm*{X_{m,h}}{\hat{u}_0}$ concludes the proof.
\end{proof}

Now, for any given observable $B\in\mathcal{L}(X)$ with particular representation given by $B_{m,h}\in\mathcal{L}(X_{m,h}^\ast)$ \textit{(operator)} we can obtain its Heisenberg evolution through the computation
\begin{equation}
\hat{B}_{n,k}:=U^\dagger_{m,k}B_{m,h} U_{n,k}
\end{equation}
the set $\{\hat{B}_{n,j}:\hat{B}_{n,j}=U_{n,j}^\dagger B U_{n,k} \}$ will be called quantum dynamical semigroup in the Heisenberg representation, also we can compute its expected value that will be described by
\begin{equation}
\mathbb{E}(\hat{B}_{n,k}):=\left\langle \hat{B}_{n,k}\right\rangle=\norm*{X_{m,h}}{U_{n,k}\hat{\psi}_0}^{-2}\scalprod*{X_{m,h}}{B_t \hat{\psi}_0}{\hat{\psi}_0}.
\end{equation}

\subsection{Computation of Quantum Dynamical Semigroups in the Heisenberg representation}

In this section we will present an example of numerical computation of a quantum dynamical semigroup in the Heisenberg representation and more specificly the Heisenberg evolution of the position operator $\mathbf{X}\in\mathcal{L}(X)$ a prescribed quantum system.

\begin{example}
For the quantum system consisting of a particle in a bidimensional box represented by $G=[-1,1]^2$ and whose wave function is modeled by the Schr{\"o}dinger equation
\begin{equation}
\label{ex1}
\left\{
\begin{array}{l}
\partial_t \psi = -i H \psi , \: x\in G \\
\psi(0)=\psi_0 \\
\psi=0, x\in\partial G
\end{array}
\right.
\end{equation}
here $H:=pp^\dagger$ with $p^\dagger=-i[\partial_x,\partial_y]$, for this example we will use the Sobolev chain $\{X_0,X_1\},\{\mathbf{1},p^\dagger\}$, also we will have that 
\begin{equation}
\scalprod*{X}{u}{v}:=\int_G u\overline{v} dxdy 
\end{equation}
and $G_{m,h}:=\{x_{j_k}\}\times\{x_{j_k}\}$, where $\{x_j\}$ the Gauss-Lobatto grid of order $2$, $\mathscr{B}:=\{\ell_{i_k}\otimes\ell_{j_k}\}$, $\ell_{i_k}$ the \textit{$i_k$-th} cardinal basis \textit{(Lagrange interpolating system)} with respect to $\{x_{j_k}\}$, wich means that $\ell_{j_k}(x_{i_k})=\delta_{j_k,i_k}$, with $\delta_{i,j}$ the Kronecker delta, using the particular factorization of $-i H_{2,1/8}$ we can obtain the particular decomposition of the spatial part of \eqref{ex1} in the following way
\begin{equation}
\hat{\psi}'(t)=-iH_{2,1/8}\hat{\psi}(t)
\end{equation}
with $H_{2,1/8}=\mathcal{M}_{2,1/8}^{-1}[p^\dagger_{2,1/8}]^\ast\mathcal{M}_{2,1/8}[p^\dagger_{2,1/8}]$, in this particular case we have that:
\begin{equation}
\mathcal{M}_{2,1/8}:=\left(\begin{array}{cc}
1	& 0\\
0	& 1
\end{array}\right)\otimes \mathcal{W}_{2,1/8}(w)
\end{equation}
with $\mathcal{W}_{2,1/8}(w):=diag\{w_{j_k}\}$ an operator that depends in a suitable sense on the basic integrating matrix $w\in\mathbb{R}^{3\times 3}$ of second order defined by 
\begin{equation}
w:=
\left(
\begin{array}{ccc}
	1/3& 0& 0\\
	0&4/3&0\\
	0&0&1/3
\end{array}
\right)
\end{equation}
and also we will have that
\begin{equation}
p_{2,1/8}^\dagger:=i\left(\begin{array}{cc}
1&0\\
0&1\\
\end{array}\right)\otimes
\mathcal{W}_{2,1/8}^{-1/2}(w)
\left[\begin{array}{c}
 D_{2,1/8}(d) \otimes \mathbf{1} \\
\mathbf{1} \otimes D_{2,1/8}(d)
\end{array}
\right]
\end{equation}
where $D_{2,1/8}(d)\in\mathcal{L}(X^\ast_{2,1/8})$ is an operator that depends in some suitable sense on the basic differentiation matrix $d\in\mathbb{R}^{3\times 3}$ of approximation order $2$ for the Gauss-Lobatto spectral elment method wich is defined by:
\begin{equation}
d:=
\left(
\begin{array}{ccc}
	-3/2& 2& -1/2\\
	-1/2&0&1/2\\
	1/2&-2&3/2
\end{array}
\right)
\end{equation}
now, since spectral methods of this kind have approximation order $m+2$ with respect to $\|\cdot\|_{X_{m,h}}$, we can take $\alpha=1/8$ and $\tau=(1/16)^3\norm*{\infty}{d^2}^{-1}$, using $H_{2,1/8}$ we can compute the basic element of the discrete semigroup $\{U_{3,k}:U_{3,k}=\hat{G}_\tau^k\}$ that will be defined in the form
\begin{equation}
\hat{G}_\tau:=1-i\tau H_{2,1/8}-\frac{(\tau H_{2,1/8})^{2}}{2}+\frac{i(\tau H_{2,1/8})^{3}}{6}
\end{equation}
using the elements of the discrete semigroup we can compute $\mathbf{\hat{X}}_{3,k}$ in the form:
\begin{equation}
\mathbf{\hat{X}}_{3,k}:=\hat{U}^\ast_{3,k}\mathbf{\hat{X}}\hat{U}_{3,k}
\end{equation}
and its expected value $\langle\mathbf{\hat{X}}_{3,k}\rangle$ can be computed using the expression:
\begin{equation}
\langle\mathbf{\hat{X}}_{3,k}\rangle:=(\mathbb{E}(x_{2,1/8}),\mathbb{E}(y_{2,1/8}))
\end{equation}
with
\begin{equation}
\mathbb{E}(\cdot)=\frac{1}{\mathcal{P}_{\psi}}\hat{\psi}^\ast_0\hat{U}_{3,k}^\ast\mathcal{M}_{2,1/8}(\cdot)\hat{U}_{3,k}\hat{\psi}_0
\end{equation}
and where
\begin{equation}
\mathcal{P}_\psi:=\hat{\psi}_0^\ast\hat{U}_{3,k}^\ast\mathcal{M}_{2,1/8}\hat{U}_{3,k}\hat{\psi}_0.
\end{equation}
now, since the particular factorization $\mathcal{H}_{2,1/8}:=\mathcal{M}_{2,1/8}H_{2,1/8}$ of $H\in\mathcal{L}(X)$ is clearly symmetric by T.\ref{tfp}  for the value of $\tau$ used in this example we can use theorems T.\ref{tscs} and T.\ref{tccs} presented above to predict the behavior of the evolution operators in the discrete semigroup relative to this quantum system. 
\end{example}

\medskip

\begin{center}
 \normalsize\scshape{Acknowledgements}
\end{center}

\medskip

I want to say thanks: To God... he does know why, to Mirna, my girl, for her love and support, for all that 
good moments everyday and for make me laugh, to my parents for their love and support, to Concepci{\'o}n Ferrufino and Rosibel Pacheco for their support and friendship, to Stanly Steinberg for his friendship and advice, to Rafael Ant{\'u}nez for his friendship and advice, to Eduardo Bravo for his advice and friendship, to Jorge Destephen for taking time to read the manuscript of many of the sections of this work, to C{\'e}sar, Sheila, Manolo, Daniel and Alex for their support in the first workshop in computational methods for partial differential equations that I organized. I am really grateful with them all.

\appendix

\section{Some Theorems from Linear Algebra}

In this section we will present some basic theorems from linear algebra that
are very useful in the study of some processes  and methods presented in this work.

\begin{theorem}
  Any symmetric positive/negative definite matrix is invertible.
\end{theorem}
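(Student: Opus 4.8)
The plan is to argue by contradiction using the kernel characterization of non-invertibility, and then to record the alternative spectral argument. Let $M\in\mathbb{C}^{N\times N}$ be symmetric (Hermitian) and positive definite, so that $x^{\ast}Mx>0$ for every $0\neq x$. First I would suppose $M$ is not invertible; then $\det M=0$, so the homogeneous system $Mx=0$ admits a nontrivial solution $x_0\neq 0$. Evaluating the quadratic form at $x_0$ gives $x_0^{\ast}Mx_0=x_0^{\ast}(Mx_0)=x_0^{\ast}0=0$, which contradicts $x_0^{\ast}Mx_0>0$. Hence $M$ has trivial kernel and is therefore invertible. The negative definite case reduces to this one: if $M$ is symmetric negative definite then $-M$ is symmetric positive definite, hence invertible, and $M$ is invertible if and only if $-M$ is.

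For completeness I would also give the spectral argument, which is the one tacitly used elsewhere in the paper when $A_{m,h}$ is diagonalized. Since $M=M^{\ast}$, the spectral theorem furnishes a unitary $U$ and a real diagonal matrix $D$ with diagonal entries $\lambda_1,\dots,\lambda_N$ such that $M=UDU^{\ast}$. Applying the quadratic form to the $j$-th column of $U$ shows that $\lambda_j>0$ for each $j$ in the positive definite case (and $\lambda_j<0$ in the negative definite case), so $\det M=\prod_j\lambda_j\neq 0$ and $M$ is invertible; one also sees that $M^{-1}=UD^{-1}U^{\ast}$ is again symmetric with eigenvalues $\lambda_j^{-1}$ of the same sign.

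There is no substantive obstacle here: the only step that deserves a word of care is the passage from \emph{not invertible} to \emph{has a nontrivial kernel}, which is the standard rank--nullity (equivalently, determinant) fact for square matrices over a field; everything else is a direct evaluation of the defining quadratic form. This lemma is precisely what licenses the inversion of the inner product matrix forms $\mathcal{M}_{m,h}$ and of the particular factorization matrices $\mathcal{A}_{m,h}$, both shown to be SPD above, as used in forming $A_{m,h}=\mathcal{M}_{m,h}^{-1}\mathcal{A}_{m,h}$.
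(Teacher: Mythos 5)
Your proof is correct and complete: the kernel argument (a nonzero $x_0$ with $Mx_0=0$ would force $x_0^{\ast}Mx_0=0$, contradicting definiteness) settles the positive definite case, and passing to $-M$ handles the negative definite case. The paper itself states this theorem in Appendix A without any proof, as a stock fact from linear algebra, so there is no authorial argument to compare against; your first paragraph alone would suffice, and the supplementary spectral decomposition, while accurate, is not needed for invertibility.
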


\begin{theorem}
  For a given matrix $\mathbf{A} \in \mathbb{C}^{m \times n}$ we will have
  that
  \[ \left\| \mathbf{A} \right\|_{\infty} : = \max_i \sum_j \left|
     \mathbf{A}_{i, j} \right| . \]
\end{theorem}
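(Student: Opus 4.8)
The plan is to read $\|\cdot\|_{\infty}$ on $\mathbb{C}^{m\times n}$ as the operator norm induced by the $\ell^{\infty}$ vector norm, that is, $\left\|\mathbf{A}\right\|_{\infty}=\sup_{\left\|x\right\|_{\infty}\leq 1}\left\|\mathbf{A}x\right\|_{\infty}$, and to establish the stated identity by proving the two inequalities separately: $\left\|\mathbf{A}\right\|_{\infty}\leq\max_i\sum_j|\mathbf{A}_{i,j}|$ (a bound valid for every admissible $x$) and $\left\|\mathbf{A}\right\|_{\infty}\geq\max_i\sum_j|\mathbf{A}_{i,j}|$ (attainment on a cleverly chosen unit vector).

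First I would prove the upper bound. Fix any $x\in\mathbb{C}^{n}$ with $\left\|x\right\|_{\infty}\leq 1$. For each row index $i$, the triangle inequality gives $|(\mathbf{A}x)_i|=\bigl|\sum_j\mathbf{A}_{i,j}x_j\bigr|\leq\sum_j|\mathbf{A}_{i,j}|\,|x_j|\leq\sum_j|\mathbf{A}_{i,j}|$, using $|x_j|\leq 1$ for every $j$. Taking the maximum over $i$ yields $\left\|\mathbf{A}x\right\|_{\infty}\leq\max_i\sum_j|\mathbf{A}_{i,j}|$, and since $x$ ranged over the whole closed unit ball, the supremum defining $\left\|\mathbf{A}\right\|_{\infty}$ is bounded above by the same quantity.

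Next I would show the bound is attained. Let $i_0$ be an index maximizing $\sum_j|\mathbf{A}_{i,j}|$, and define $x^{\star}\in\mathbb{C}^{n}$ by $x^{\star}_j:=\overline{\mathbf{A}_{i_0,j}}/|\mathbf{A}_{i_0,j}|$ when $\mathbf{A}_{i_0,j}\neq 0$ and $x^{\star}_j:=1$ otherwise; then $|x^{\star}_j|\leq 1$ for all $j$, so $\left\|x^{\star}\right\|_{\infty}\leq 1$. With this choice $\mathbf{A}_{i_0,j}x^{\star}_j=|\mathbf{A}_{i_0,j}|$ for each $j$, hence $(\mathbf{A}x^{\star})_{i_0}=\sum_j|\mathbf{A}_{i_0,j}|=\max_i\sum_j|\mathbf{A}_{i,j}|$, a nonnegative real number. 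Therefore $\left\|\mathbf{A}x^{\star}\right\|_{\infty}\geq|(\mathbf{A}x^{\star})_{i_0}|=\max_i\sum_j|\mathbf{A}_{i,j}|$, and since $x^{\star}$ lies in the closed unit ball we conclude $\left\|\mathbf{A}\right\|_{\infty}\geq\max_i\sum_j|\mathbf{A}_{i,j}|$. Combining the two inequalities gives the claimed formula.

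The only step that requires genuine care is the construction of the extremal vector $x^{\star}$ in the complex case: its components must carry the unit-modulus phases conjugate to the entries of the critical row $i_0$, so that every term of $(\mathbf{A}x^{\star})_{i_0}$ adds constructively and the sum equals the absolute row sum; over $\mathbb{R}$ this collapses to the familiar choice $x^{\star}_j=\mathrm{sgn}(\mathbf{A}_{i_0,j})$. Everything else is routine use of the triangle inequality and the definition of the $\ell^{\infty}$ norm.
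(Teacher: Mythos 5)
Your proof is correct and complete. The paper states this result in Appendix A without any proof (the displayed formula is even written with $:=$, as if the row-sum expression were simply the definition of $\left\| \mathbf{A} \right\|_{\infty}$), so there is nothing in the paper to compare against; your two-inequality derivation, with the conjugate-phase extremal vector handling the complex case, is the standard and fully rigorous way to show that the operator norm induced by the $\ell^{\infty}$ vector norm equals $\max_i \sum_j \left| \mathbf{A}_{i,j} \right|$.
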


\begin{theorem}
  \label{T11A}Every symmetric positive/negative definite matrix $\mathbf{A}
  \in \mathbb{C}^{m \times n}$ has all of its eigenvalues real
  positive/negative and its eigenvectors form an orthogonal system.
\end{theorem}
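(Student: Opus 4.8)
The plan is to prove the statement for a symmetric positive definite $\mathbf{A}$ — so, in the complex setting, $\mathbf{A}=\mathbf{A}^{\ast}$ and necessarily square, say $\mathbf{A}\in\mathbb{C}^{n\times n}$ — and then to recover the negative definite case at once by applying the argument to $-\mathbf{A}$. First I would settle the eigenvalues. Given $\lambda\in\sigma(\mathbf{A})$ with eigenvector $v\neq 0$, multiplying $\mathbf{A}v=\lambda v$ on the left by $v^{\ast}$ gives $v^{\ast}\mathbf{A}v=\lambda\left\|v\right\|_2^{2}$. Because $\mathbf{A}=\mathbf{A}^{\ast}$, the scalar $v^{\ast}\mathbf{A}v$ equals its own conjugate, hence is real; since $\left\|v\right\|_2^{2}>0$ this forces $\lambda\in\mathbb{R}$. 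Positive definiteness then yields $v^{\ast}\mathbf{A}v>0$, so $\lambda=v^{\ast}\mathbf{A}v/\left\|v\right\|_2^{2}>0$, as claimed (and $<0$ in the negative definite case).

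Next I would treat the orthogonality of eigenvectors belonging to distinct eigenvalues. If $\mathbf{A}v_1=\lambda_1 v_1$ and $\mathbf{A}v_2=\lambda_2 v_2$, then using that $\lambda_2$ is real and $\mathbf{A}=\mathbf{A}^{\ast}$ one computes $\lambda_1 v_2^{\ast}v_1 = v_2^{\ast}\mathbf{A}v_1 = (\mathbf{A}v_2)^{\ast}v_1 = \lambda_2 v_2^{\ast}v_1$, whence $(\lambda_1-\lambda_2)v_2^{\ast}v_1=0$; when $\lambda_1\neq\lambda_2$ this gives $v_2^{\ast}v_1=0$.

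The remaining and principal point is to show that the eigenvectors assemble into a \emph{complete} orthogonal system, i.e. an orthogonal basis of $\mathbb{C}^{n}$ — this is the spectral theorem for Hermitian matrices, and it is the step I expect to require the most care. I would argue by induction on $n$: choose a (necessarily real) eigenvalue $\lambda_1$ and a unit eigenvector $v_1$, and observe that the orthogonal complement $v_1^{\perp}$ is invariant under $\mathbf{A}$, since for $w\perp v_1$ one has $v_1^{\ast}(\mathbf{A}w) = (\mathbf{A}v_1)^{\ast}w = \lambda_1 v_1^{\ast}w = 0$. The restriction of $\mathbf{A}$ to $v_1^{\perp}$ is again symmetric positive definite on an $(n-1)$-dimensional space, so the inductive hypothesis furnishes an orthonormal eigenbasis there, which adjoined to $v_1$ produces an orthonormal eigenbasis of $\mathbb{C}^{n}$. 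Finally, any prescribed eigenvector lies in the span of those basis vectors sharing its eigenvalue, so a Gram--Schmidt orthogonalization performed inside each eigenspace exhibits every family of eigenvectors as an orthogonal system, which completes the argument.
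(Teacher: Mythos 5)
Your proof is correct: the reality and sign of the eigenvalues via $v^{\ast}\mathbf{A}v=\lambda\left\|v\right\|_2^{2}$, the orthogonality across distinct eigenvalues, and the inductive construction of a full orthonormal eigenbasis using the invariance of $v_1^{\perp}$ are all sound, and you rightly observe that $\mathbf{A}$ must in fact be square and that ``the eigenvectors form an orthogonal system'' should be read as the existence of an orthogonal eigenbasis (with Gram--Schmidt inside each eigenspace), since arbitrary eigenvectors of a repeated eigenvalue need not be orthogonal. There is nothing in the paper to compare against: Theorem \ref{T11A} appears in Appendix A as one of several standard linear-algebra facts stated without proof, so your argument --- the classical spectral theorem for Hermitian matrices --- simply supplies the proof the paper omits.
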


\begin{theorem}
  For a given matrix $\mathbf{A} \in \mathbb{C}^{m \times n}$ we will have
  that
  \[ \left\| \mathbf{A} \right\|_2 := \max \{ \sqrt{\lambda} : \lambda
     \in \sigma ( \mathbf{A}^{\ast} \mathbf{A})\}. \]
\end{theorem}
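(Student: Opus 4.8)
The plan is to work straight from the definition of the induced operator norm, $\left\| \mathbf{A} \right\|_2 = \sup_{x \neq 0} \left\| \mathbf{A} x \right\|_2 / \left\| x \right\|_2 = \sup_{\left\| x \right\|_2 = 1} \left\| \mathbf{A} x \right\|_2$, and to reduce the evaluation of this supremum to an eigenvalue problem for the matrix $\mathbf{A}^{\ast} \mathbf{A} \in \mathbb{C}^{n \times n}$. The first step is to observe that $\left\| \mathbf{A} x \right\|_2^2 = \left\langle \mathbf{A} x, \mathbf{A} x \right\rangle = \left\langle \mathbf{A}^{\ast} \mathbf{A} x, x \right\rangle$, so that maximizing $\left\| \mathbf{A} x \right\|_2$ over the unit sphere is the same as maximizing the Rayleigh quotient of $\mathbf{A}^{\ast} \mathbf{A}$.

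Next I would note that $\mathbf{A}^{\ast} \mathbf{A}$ is symmetric (Hermitian), since $(\mathbf{A}^{\ast}\mathbf{A})^{\ast} = \mathbf{A}^{\ast}\mathbf{A}$, and that $\left\langle \mathbf{A}^{\ast} \mathbf{A} x, x \right\rangle = \left\| \mathbf{A} x \right\|_2^2 \geq 0$ for all $x$, so it is positive semidefinite; hence, by T.\ref{T11A} (applied to its positive definite part, the semidefinite case being entirely analogous), all of its eigenvalues are real and nonnegative and its eigenvectors $\{v_j\}$ may be chosen to form an orthonormal basis of $\mathbb{C}^n$. In particular, the quantity $\max \{ \sqrt{\lambda} : \lambda \in \sigma ( \mathbf{A}^{\ast} \mathbf{A})\}$ is well defined. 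I would then expand an arbitrary unit vector as $x = \sum_j c_j v_j$ with $\sum_j |c_j|^2 = 1$ and compute $\left\langle \mathbf{A}^{\ast} \mathbf{A} x, x \right\rangle = \sum_j \lambda_j |c_j|^2$, where $\lambda_j$ is the eigenvalue attached to $v_j$. Bounding this convex combination above by $\lambda_{\max} := \max \sigma(\mathbf{A}^{\ast}\mathbf{A})$ yields $\left\| \mathbf{A} x \right\|_2^2 \leq \lambda_{\max}$ for every unit $x$, hence $\left\| \mathbf{A} \right\|_2 \leq \sqrt{\lambda_{\max}}$, while taking $x$ to be the unit eigenvector for $\lambda_{\max}$ attains equality, giving $\left\| \mathbf{A} \right\|_2 = \sqrt{\lambda_{\max}} = \max \{ \sqrt{\lambda} : \lambda \in \sigma ( \mathbf{A}^{\ast} \mathbf{A})\}$.

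There is essentially no serious obstacle here. The only points that need care are that the supremum defining $\left\| \mathbf{A} \right\|_2$ is in fact attained — which holds because the unit sphere in $\mathbb{C}^n$ is compact and $x \mapsto \left\| \mathbf{A} x \right\|_2$ is continuous, so one may legitimately speak of a maximizing vector — and the appeal to the spectral decomposition of the Hermitian positive semidefinite matrix $\mathbf{A}^{\ast}\mathbf{A}$, which is furnished by T.\ref{T11A}. No further estimates are required, and the argument works verbatim for rectangular $\mathbf{A}$ since $\mathbf{A}^{\ast}\mathbf{A}$ is always square.
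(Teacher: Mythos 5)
Your argument is correct and complete: it is the standard derivation of the identity $\left\| \mathbf{A} \right\|_2 = \sqrt{\lambda_{\max}(\mathbf{A}^{\ast}\mathbf{A})}$ via the Rayleigh quotient and the spectral decomposition of the Hermitian positive semidefinite matrix $\mathbf{A}^{\ast}\mathbf{A}$, with the attainment of the supremum justified by compactness of the unit sphere. The paper itself states this result in Appendix A without any proof, so there is nothing to compare against; your write-up supplies exactly the argument one would expect, and your remark that T.\ref{T11A} must be stretched to cover the positive \emph{semi}definite case (since $\mathbf{A}^{\ast}\mathbf{A}$ may be singular) is an appropriate point of care.
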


\begin{theorem}\label{tg1}
  Gershgorin Theorem. For any given matrix $\mathbf{A} \in \mathbb{C}^{m
  \times n}$ we will have that $\sigma ( \mathbf{A}) \subset \bigcup_i D_i$
  with
  \[ D_i := \{z \in \mathbb{C}: \left| z - \mathbf{A}_{i, i} \right|
     \leq \sum_{j \neq i} \left| \mathbf{A}_{i, j} \right| \}, 1 \leq i \leq
     m \]
     \end{theorem}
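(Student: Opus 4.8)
The plan is to run the classical maximal-coordinate argument. First I would fix an arbitrary $\lambda \in \sigma(\mathbf{A})$; since the spectrum is only meaningful for square matrices I read the hypothesis with $m = n$. By definition of the spectrum there exists a nonzero vector $v = (v_1, \ldots, v_n)$ with $\mathbf{A} v = \lambda v$. The decisive step is the choice of index: let $i \in \{1, \ldots, n\}$ be such that $|v_i| = \max_k |v_k|$. Because $v \neq 0$ this maximum is strictly positive, so $|v_i| > 0$.

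Next I would write out the $i$-th component of the identity $\mathbf{A} v = \lambda v$, that is $\sum_{j} \mathbf{A}_{i,j} v_j = \lambda v_i$, and isolate the diagonal contribution to get $(\lambda - \mathbf{A}_{i,i}) v_i = \sum_{j \neq i} \mathbf{A}_{i,j} v_j$. Taking moduli and applying the triangle inequality yields $|\lambda - \mathbf{A}_{i,i}|\,|v_i| \leq \sum_{j \neq i} |\mathbf{A}_{i,j}|\,|v_j|$. Dividing by $|v_i| > 0$ and using that $|v_j|/|v_i| \leq 1$ for every $j$ — which is exactly what the maximality of $|v_i|$ provides — gives $|\lambda - \mathbf{A}_{i,i}| \leq \sum_{j \neq i} |\mathbf{A}_{i,j}|$, i.e. $\lambda \in D_i$.

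Finally, since $\lambda \in D_i \subset \bigcup_k D_k$ and $\lambda$ was an arbitrary element of $\sigma(\mathbf{A})$, the inclusion $\sigma(\mathbf{A}) \subset \bigcup_i D_i$ follows. There is essentially no real obstacle in this argument: the only points requiring care are that the selected coordinate is nonzero (guaranteed by $v \neq 0$) and that the division by $|v_i|$ is therefore legitimate. The one idea that makes everything work is precisely the selection of the coordinate of largest modulus, which forces every ratio $|v_j|/|v_i|$ to be bounded by $1$.
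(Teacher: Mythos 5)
Your argument is the standard and correct proof of Gershgorin's theorem: choosing the coordinate of maximal modulus, isolating the diagonal term in that row of $\mathbf{A}v=\lambda v$, and dividing by $|v_i|>0$ is exactly the right mechanism, and your reading of the hypothesis as $m=n$ (needed for $\sigma(\mathbf{A})$ to make sense) is the correct repair of the statement. The paper itself states this result in Appendix~A as a known fact from linear algebra and gives no proof, so there is nothing to compare against; your proof fills that gap correctly and takes the canonical route.
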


Using theorems T.\ref{tg1} and T.\ref{T11A} one can obtain the following.
  \begin{corollary}
    \label{T2inf}For any symmetric positive/negative definite/semi-definite matrix $\mathbf{A} \in
    \mathbb{C}^{m \times n}$ we will have that
    \[ \left\| \mathbf{A} \right\|_2 \leq \left\| \mathbf{A}
       \right\|_{\infty} . \]
  \end{corollary}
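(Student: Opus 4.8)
The plan is to combine the characterization of the spectral $2$-norm for symmetric matrices with the Gershgorin theorem T.\ref{tg1}. First I would observe that, since $\mathbf{A}$ is symmetric, one has $\mathbf{A}^{\ast}\mathbf{A}=\mathbf{A}^2$, and since by T.\ref{T11A} every $\lambda\in\sigma(\mathbf{A})$ is real, the spectrum of $\mathbf{A}^{\ast}\mathbf{A}$ is exactly $\{\lambda^2:\lambda\in\sigma(\mathbf{A})\}$. Hence, directly from the definition of $\left\|\cdot\right\|_2$,
\[
\left\|\mathbf{A}\right\|_2=\max\{\sqrt{\lambda^2}:\lambda\in\sigma(\mathbf{A})\}=\max\{|\lambda|:\lambda\in\sigma(\mathbf{A})\},
\]
so that for symmetric matrices the $2$-norm coincides with the spectral radius.

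Next I would estimate this spectral radius by means of the Gershgorin disks. For every $\lambda\in\sigma(\mathbf{A})$ there is an index $i$ with $|\lambda-\mathbf{A}_{i,i}|\leq\sum_{j\neq i}|\mathbf{A}_{i,j}|$, and therefore, by the triangle inequality,
\[
|\lambda|\leq|\mathbf{A}_{i,i}|+\sum_{j\neq i}|\mathbf{A}_{i,j}|=\sum_j|\mathbf{A}_{i,j}|\leq\max_i\sum_j|\mathbf{A}_{i,j}|=\left\|\mathbf{A}\right\|_{\infty}.
\]
Taking the supremum over $\lambda\in\sigma(\mathbf{A})$ and comparing with the previous display yields $\left\|\mathbf{A}\right\|_2\leq\left\|\mathbf{A}\right\|_{\infty}$, which is the assertion; the positive/negative definite and semi-definite cases are all covered at once, since the argument only uses $|\lambda|$ and never divides by $\lambda$.

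The computation above is essentially routine once these two ingredients are in place, so I do not expect a serious obstacle; the only point that requires a word of care is that $\sigma(\mathbf{A})$, the identity $\mathbf{A}^{\ast}\mathbf{A}=\mathbf{A}^2$, and T.\ref{T11A} all presuppose $\mathbf{A}$ to be square and Hermitian, which is exactly what the hypothesis of (semi-)definiteness guarantees. I expect the first display — identifying $\left\|\mathbf{A}\right\|_2$ with the spectral radius via $\mathbf{A}^{\ast}\mathbf{A}=\mathbf{A}^2$ — to be the step that must be spelled out most carefully, after which the bound is an immediate consequence of T.\ref{tg1}.
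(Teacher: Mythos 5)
Your argument is correct and is exactly the route the paper intends: it offers no written proof beyond the remark that the corollary follows from the Gershgorin theorem T.\ref{tg1} and the spectral theorem T.\ref{T11A}, and your reduction of $\left\| \mathbf{A} \right\|_2$ to the spectral radius via $\mathbf{A}^{\ast}\mathbf{A}=\mathbf{A}^2$ followed by the Gershgorin bound on $|\lambda|$ is precisely that derivation, spelled out.
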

  
  \begin{theorem}
  \label{T15A}For any given matrix $\mathbf{A} \in \mathbb{C}^{n \times n}$
  and any $\lambda \in \sigma ( \mathbf{A})$ whose corresponding eigenvector
  is given by $v_{\lambda} \in \mathbb{C}^n$ we will have for every
  polynomial $p_m (z) := a_0 + a_1 z + \cdots a_m z^m \in \mathcal{P}_m
  (\mathbb{C})$, with $\mathcal{P}_m (\mathbb{C})$ the set of all
  polynomials of degree $\leq m$, that $p_m (\lambda) \in \sigma (p_m (
  \mathbf{A}))$ will be an eigenvalue of $p_m ( \mathbf{A}) \in
  \mathbb{C}^{n \times n}$ with corresponding eigenvector $v_{\lambda}\in\mathbb{C}^n$.
\end{theorem}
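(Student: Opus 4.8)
The plan is to establish this polynomial spectral mapping property directly: first show that $v_\lambda$ is an eigenvector of every power $\mathbf{A}^k$ with eigenvalue $\lambda^k$, and then extend to an arbitrary polynomial by linearity. No earlier result is needed; the argument is self-contained.

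First I would fix $\lambda \in \sigma(\mathbf{A})$ with eigenvector $0 \neq v_\lambda \in \mathbb{C}^n$, so that $\mathbf{A} v_\lambda = \lambda v_\lambda$, and prove by induction on $k \in \mathbb{Z}^+_0$ that $\mathbf{A}^k v_\lambda = \lambda^k v_\lambda$. The base case $k = 0$ is immediate since $\mathbf{A}^0 = \mathbf{1}$ and $\lambda^0 = 1$; assuming $\mathbf{A}^k v_\lambda = \lambda^k v_\lambda$ one has
\[
\mathbf{A}^{k+1} v_\lambda = \mathbf{A}(\mathbf{A}^k v_\lambda) = \mathbf{A}(\lambda^k v_\lambda) = \lambda^k (\mathbf{A} v_\lambda) = \lambda^{k+1} v_\lambda,
\]
which closes the induction (the scalar $\lambda^k$ commutes past $\mathbf{A}$). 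Then, writing $p_m(z) = a_0 + a_1 z + \cdots + a_m z^m$ and using linearity of matrix--vector multiplication,
\[
p_m(\mathbf{A}) v_\lambda = \sum_{k=0}^m a_k \mathbf{A}^k v_\lambda = \sum_{k=0}^m a_k \lambda^k v_\lambda = \left(\sum_{k=0}^m a_k \lambda^k\right) v_\lambda = p_m(\lambda) v_\lambda.
\]
Since $v_\lambda \neq 0$, this exhibits $p_m(\lambda)$ as an eigenvalue of $p_m(\mathbf{A}) \in \mathbb{C}^{n \times n}$ with the same eigenvector $v_\lambda$, hence $p_m(\lambda) \in \sigma(p_m(\mathbf{A}))$, as claimed.

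There is no real obstacle: the statement is the polynomial case of the spectral mapping theorem and the proof is elementary. The only points deserving a word of care are the routine bookkeeping in the induction and the observation that the hypothesis $v_\lambda \neq 0$ is precisely what guarantees $p_m(\lambda)$ lies genuinely in the spectrum of $p_m(\mathbf{A})$ rather than being a spurious scalar.
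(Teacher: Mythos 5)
Your proof is correct and complete: the induction on powers of $\mathbf{A}$ followed by linearity is the standard elementary argument for the polynomial spectral mapping property, and you rightly flag that $v_\lambda \neq 0$ is what makes $p_m(\lambda)$ a genuine eigenvalue. The paper states this result in its appendix of auxiliary linear-algebra facts without giving any proof, so there is nothing to compare against; your argument supplies exactly the routine justification the paper omits.
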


\section{A Theorem from Real Analysis.}

In this section we will present a basic theorem from real analysis that is
very useful in the study of some processes presented in this article.

\begin{theorem}
  \label{T16B}Let $\sum a_n$ be a number series such that
  \[ \lim_{n \rightarrow \infty} a_n = 0 \]
  such that the terms $a_n$ are alternating positive and negative and such
  that $\left| a_{n + 1} \right| \leq \left| a_n \right|$ for $n \geq 0$. Then
  the series converge and
  \begin{equation}
    \left| \sum_{n = 0}^{\infty} a_n \right| \leq \left| a_0 \right| .
  \end{equation}
\end{theorem}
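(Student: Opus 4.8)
The plan is to recognize this as the Leibniz alternating-series criterion and to prove it by analyzing the even- and odd-indexed partial sums separately. First I would reduce to the case $a_0 > 0$: if $a_0 = 0$ the hypothesis $|a_{n+1}| \le |a_n|$ forces $a_n = 0$ for all $n$ and the statement is trivial, while the case $a_0 < 0$ follows by applying the result to the series $\sum (-a_n)$. So assume $a_0 > 0$, which together with the alternating-sign hypothesis gives $a_n = (-1)^n |a_n|$, and set $S_N := \sum_{n=0}^N a_n$.

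Next I would establish the monotonicity of the two interlaced subsequences. From $S_{2k+2} - S_{2k} = a_{2k+1} + a_{2k+2} = -|a_{2k+1}| + |a_{2k+2}| \le 0$ (by $|a_{n+1}| \le |a_n|$) the subsequence $\{S_{2k}\}_{k \ge 0}$ is nonincreasing, and from $S_{2k+3} - S_{2k+1} = |a_{2k+2}| - |a_{2k+3}| \ge 0$ the subsequence $\{S_{2k+1}\}_{k \ge 0}$ is nondecreasing. Moreover $S_{2k} - S_{2k+1} = -a_{2k+1} = |a_{2k+1}| \ge 0$, so $S_1 \le S_{2k+1} \le S_{2k} \le S_0$ for every $k$; hence $\{S_{2k}\}$ is bounded below and $\{S_{2k+1}\}$ is bounded above. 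By the monotone convergence theorem both converge, to limits $L_e$ and $L_o$, and since $S_{2k} - S_{2k+1} = |a_{2k+1}| \to 0$ by the hypothesis $\lim_n a_n = 0$, we get $L_e = L_o =: S$. A short bookkeeping argument (every $S_N$ is some $S_{2k}$ or some $S_{2k+1}$, and both subsequences tend to $S$) then shows $\lim_N S_N = S$, i.e. the series converges.

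Finally I would read off the bound from the monotonicity already proved: $S_1 \le S \le S_0$, that is $|a_0| - |a_1| \le S \le |a_0|$, and since $|a_1| \le |a_0|$ we have $S_1 = |a_0| - |a_1| \ge 0$, so $0 \le S \le |a_0|$ and therefore $\left| \sum_{n=0}^{\infty} a_n \right| = |S| \le |a_0|$. In the reduced case $a_0 < 0$ the same chain yields $-|a_0| \le S \le 0$, giving the identical conclusion.

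I do not anticipate a genuine obstacle here; the only point that needs a little care is the bookkeeping that the two monotone, interlacing subsequences of partial sums share a common limit which is in fact the limit of the full sequence $\{S_N\}$ — every other ingredient is an immediate consequence of the monotone convergence theorem together with the hypothesis $a_n \to 0$.
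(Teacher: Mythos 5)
Your proof is correct: the reduction to $a_0>0$, the monotonicity of the interlaced subsequences $\{S_{2k}\}$ and $\{S_{2k+1}\}$, their common limit via $S_{2k}-S_{2k+1}=|a_{2k+1}|\to 0$, and the final squeeze $0\le S\le S_0=|a_0|$ are all sound; this is the standard Leibniz alternating-series argument. Note that the paper itself states Theorem~\ref{T16B} in Appendix~B without any proof, presenting it as a known fact from real analysis, so there is no in-paper argument to compare against --- your write-up simply supplies the classical proof the author implicitly invokes, and it does so completely.
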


\begin{remark}
  From the last theorem we also have
  \begin{equation}
    \left| \sum_{n = k}^{\infty} a_n \right| \leq \left| a_k \right|
  \end{equation}
  and
  \begin{equation}
    \left| \sum_{n = k}^m a_n \right| \leq \left| a_k \right| .
  \end{equation}
\end{remark}

\end{document}